\newtheorem{thm}{Theorem}[section]
\newtheorem{lem}[thm]{Lemma}
\newtheorem{prop}[thm]{Proposition}
\theoremstyle{definition}
\newtheorem{defn}[thm]{Definition}
\theoremstyle{remark}
\newtheorem{rem}[thm]{Remark}
\numberwithin{equation}{section}
\newcommand{\be}{\begin{equation}}
\newcommand{\ee}{\end{equation}}
\newcommand{\R}{\mathbb R}
\newcommand{\eps}{\varepsilon}
\newcommand{\Om}{\Omega}
\newcommand{\p}{\partial}
\newcommand{\comment}[1]{}
\begin{document}

\title[Global solutions with flat level sets]{Some remarks on the classification of global solutions with asymptotically flat level sets} %
\author{O. Savin}
\address{Department of Mathematics, Columbia University, New York, NY 10027}
\email{\tt  savin@math.columbia.edu}
\begin{abstract}

We simplify some technical steps from \cite{S1} in which a conjecture of De Giorgi was addressed. For completeness we make the paper self-contained and reprove the classification of certain global bounded solutions for semilinear equations of the type
$$\triangle u=W'(u),$$
where $W$ is a double well potential. 
\end{abstract}
\maketitle

\section{Introduction}

In this article we reprove the results from \cite{S1} concerning a conjecture of De Giorgi about the classification of certain global bounded solutions for semilinear equations of the type
$$\triangle u=W'(u),$$
where $W$ is a double well potential. 
The purpose of this paper is to simplify one technical step from the proof in \cite{S1} concerning the Harnack inequality of the level sets. 
In this way the arguments in \cite{S1} become more accessible and transparent and can be applied to other similar situations. 
For example in \cite{S4} we extend the same methods to the case of the fractional Laplacian. 

For clarity of exposition we will make the paper self contained and provide a compete proof of the main result Theorem \ref{TM} by following most of the other arguments as in \cite{S1}. 
 
 \
 
We consider the Ginzburg-Landau energy functional
$$J(u, \Omega)=\int_\Omega \frac12 |\nabla u|^2+W(u) \, dx, \quad |u|\le 1$$ with $W$ a double-well
potential with minima at $1$ and $-1$ satisfying
$$W \in C^2([-1,1]), \quad W(-1)=W(1)=0, \quad \mbox{$W>0$ on $(-1,1)$},$$
$$W'(-1)=W'(1)=0, \quad W''(-1)>0, \quad W''(1)>0.$$
The
classical double-well potential $W$ to have in mind is
$$W(s)=\frac{1}{4}(1-s^2)^2.$$

Physically $u \equiv- 1$ and $u \equiv 1$ represent the stable phases of a fluid and a critical points for the energy $J$ correspond to phase transitions between these states. 

Our main result provides the classification of minimizers with asymptotically flat level sets.

\begin{thm} \label{TM}
Let $u$ be a global minimizer of $J$ in $\mathbb{R}^n$. If the $0$ level set $\{u=0\}$ is asymptotically flat at $\infty$,
then $u$ is one-dimensional.
\end{thm}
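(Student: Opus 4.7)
The plan is to follow De Giorgi's improvement-of-flatness strategy, transplanted from minimal surface theory to the Allen-Cahn setting. The quantitative reading of the hypothesis is that for every $\theta>0$ there is $R_\theta$ such that for $R\ge R_\theta$ the level set $\{u=0\}\cap B_R$ lies in some slab $\{|(x-x_0)\cdot\nu|\le\theta R\}$. The objective is to show that under such flatness the slab can be rotated and \emph{relatively} tightened at a fixed geometric rate on a smaller ball, and then iterate.

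The heart of the argument is an \emph{improvement of flatness} dichotomy: there exist universal constants $\theta_0>0$ and $\eta\in(0,1)$ such that if $\{u=0\}\cap B_1$ is trapped in a slab of half-width $\theta\le\theta_0$ in direction $\nu$, then $\{u=0\}\cap B_\eta$ is trapped in a slab of half-width $\eta\cdot(\theta/2)$ in some possibly rotated direction $\nu'$. I would prove this by a compactness/contradiction argument: assume a sequence $u_k$ of global minimizers with flatness $\theta_k\to 0$ violates the conclusion, then dilate the level sets vertically by $1/\theta_k$. After subtracting the one-dimensional profile in the $\nu_k$ direction and passing to a subsequential limit, the rescaled level sets should converge to the graph of a function $w$ on the hyperplane perpendicular to $\nu$. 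The limiting $w$ should satisfy the Laplace equation (which plays here the role that the linearized minimal surface equation plays in De Giorgi's theorem), and then standard $C^{1,\alpha}$ interior estimates for harmonic functions provide the flatness improvement.

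The crucial input for the compactness step is a \emph{Harnack inequality for level sets}: if $\{u=0\}$ is contained in a thin slab on a large ball, then the neighboring level sets $\{u=s\}$ (for $s$ away from $\pm 1$) are trapped in a slab of comparable width, and the oscillation of the height graph on a smaller ball is controlled by a fixed fraction of its oscillation on the larger ball. This is the technical step that the present paper aims to simplify, and I expect it to be the main obstacle: it requires constructing suitable barriers adapted to the phase transition, exploiting that on a thin slab a minimizer is forced to be close to the one-dimensional heteroclinic profile in the transverse direction. Minimality (and not just criticality) enters decisively here, to rule out bubbles of the opposite phase that would otherwise be compatible with the semilinear equation.

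Once improvement of flatness is established, I would iterate it on dyadically shrinking balls centered at a fixed point of $\{u=0\}$. The direction vectors $\nu_k$ form a Cauchy sequence with limit $\nu_\infty$, and the slab widths decay geometrically, forcing $\{u=0\}$ to coincide with the hyperplane $\{x\cdot\nu_\infty = c\}$. Finally, to upgrade ``flat zero level set'' to ``$u$ one-dimensional'', I would invoke a sliding argument: after rotating coordinates so that $\nu_\infty = e_n$, compare $u(x',x_n)$ with its translates $u(x',x_n+t)$ and use the strong maximum principle on $\partial_e u$ for $e \perp e_n$, or equivalently appeal to uniqueness of the one-dimensional minimizing heteroclinic to conclude that $u$ depends only on $x_n$.
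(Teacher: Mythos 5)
Your outline follows the same strategy as the paper (a Harnack inequality for the level set, compactness of vertically dilated level sets to a harmonic graph, then iteration of an improvement-of-flatness step), but as a proof it has two genuine gaps. The first is that the entire technical content is left unproven: the ``Harnack inequality for level sets'' and the statement that the linearized limit is harmonic are exactly Theorem \ref{TH} and Proposition \ref{p2}, and their proofs occupy Sections 2, 3, 5 and 6 (radial barriers built from the 1D profile, the sliding arguments of Lemma \ref{l2}--Proposition \ref{p2}, the ABP/covering weak Harnack estimates, and the Modica $\Gamma$-convergence energy contradiction, which is where minimality enters). You correctly identify this as the main obstacle, but flagging it is not supplying it; without it the compactness step has no basis.

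The second gap is structural: your improvement-of-flatness dichotomy is formulated at unit scale, with universal $\theta_0$ and slab half-width $\theta\le\theta_0$ tending to $0$, and is then iterated on dyadically shrinking balls around a fixed point until the widths ``decay geometrically, forcing $\{u=0\}$ to coincide with a hyperplane.'' This cannot work for the Allen--Cahn functional, which has a fixed length scale (the transition-layer width): both Theorem \ref{TH} and Theorem \ref{c1alpha} require the height $\theta$ to be bounded \emph{below} by $\theta_0$ and the radius $l$ to be large, with $\theta l^{-1}\le\varepsilon_1(\theta_0)$ and constants depending on $\theta_0$. Indeed, the energy gain $c_0$ in the proof of Theorem \ref{TH} comes from two transition layers separated by at least $\theta_0/4$, and Proposition \ref{p2} operates only at scales $R^{2/5}$ with $R$ large, so no uniform improvement is available once the slab width is comparable to or below the layer width. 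Consequently the iteration must be organized as in the paper: starting from the huge cylinders given by asymptotic flatness, one improves until the height drops to $\theta_0$, notes that the radius is then still at least $\eta_1\theta_0/\varepsilon$, lets $\varepsilon\to 0$ to trap $\{u=0\}$ in an infinite strip of width $\theta_0$, and only then sends $\theta_0\to 0$. A shrinking-ball iteration at a fixed point, even if its hypotheses held at all scales, would only produce a tangent-plane ($C^{1,\alpha}$-type) statement at that point, not the global conclusion that the level set is a hyperplane; and note also that the hypothesis of Theorem \ref{TM} gives flatness only along a sequence of radii $l_k\to\infty$, not for all large radii as your quantitative reading assumes. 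Your final step (upgrading a flat zero level set to one-dimensionality of $u$) is fine and is treated equally briefly in the paper.
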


A more quantitative version of Theorem \ref{TM} is given in Theorem \ref{c1alpha}.

It is well known that blowdowns of the level set $\{u=0\}$ have subsequences that converge uniformly on compact sets to a minimal surface. 
This follows easily from the $\Gamma$-convergence result of Modica \cite{M} (see Theorem \ref{l1conv} below) 
and the density estimate of level sets due to Caffarelli-Cordoba \cite{CC}.  

This means the level sets of minimizers of $J$ are asymptotically flat at $\infty$ in dimension $n \le 7$, and we obtain the following corollary of Theorem \ref{TM}.

\begin{thm}
A global minimizer of $J$ is one-dimensional in dimension $n \le 7$.
\end{thm}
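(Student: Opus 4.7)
The plan is to deduce the corollary from Theorem \ref{TM} by verifying its hypothesis: in dimensions $n \le 7$, every global minimizer of $J$ has asymptotically flat level sets at infinity. The argument splits into a blowdown analysis, an application of the regularity theory of area-minimizing hypersurfaces, and a compactness step to convert sequential convergence into the uniform flatness hypothesis of Theorem \ref{TM}.

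First, I would set up the blowdown sequence. Let $u$ be a global minimizer and, for $R_k \to \infty$, consider the rescaled functions $u_{R_k}(x) = u(R_k x)$ together with the rescaled level sets $R_k^{-1}\{u = 0\}$. Using Modica's $\Gamma$-convergence theorem (Theorem \ref{l1conv}), the characteristic functions $\chi_{\{u_{R_k}>0\}}$ converge in $L^1_{\mathrm{loc}}$, along a subsequence, to $\chi_E$ where $E \subset \mathbb{R}^n$ is a global perimeter minimizer. The Caffarelli--Cordoba density estimates for level sets of $J$ upgrade this $L^1_{\mathrm{loc}}$ convergence to Hausdorff convergence on compact sets of the rescaled level sets to $\partial E$, ruling out stray pieces or cusps invisible to $L^1$.

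Next, I would invoke the classical regularity theorem for global area-minimizing hypersurfaces due to De Giorgi, Fleming, Almgren, and Simons: in $\mathbb{R}^n$ with $n \le 7$, every such hypersurface is a hyperplane. Hence every subsequential blowdown limit $\partial E$ is a hyperplane through the origin.

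Finally, I would convert this into the uniform asymptotic flatness required by Theorem \ref{TM} by a standard contradiction argument. If the level set were not asymptotically flat, there would exist $\varepsilon>0$ and radii $R_k \to \infty$ such that $R_k^{-1}(\{u=0\}\cap B_{R_k})$ remains at Hausdorff distance $\ge \varepsilon$ from every hyperplane through the origin. Extracting a blowdown limit along $R_k$ would yield a minimal surface in $B_1$ that is not contained in any $\varepsilon$-neighborhood of a hyperplane, contradicting the previous step. Once asymptotic flatness is established, Theorem \ref{TM} immediately gives that $u$ is one-dimensional.

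The delicate point is the passage from weak $L^1$ convergence to the uniform Hausdorff convergence that encodes genuine geometric flatness of the level set; this is precisely where the Caffarelli--Cordoba density estimates are indispensable. The dimensional restriction $n \le 7$ enters only through Simons' theorem, which fails in $n \ge 8$ because of the Simons cone.
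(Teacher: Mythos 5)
Your argument is correct and is essentially the paper's own (the paper only sketches it in the introduction): blowdown plus Modica's $\Gamma$-convergence and the Caffarelli--Cordoba density estimates give uniform convergence of rescaled level sets to the boundary of a global perimeter minimizer, which is a hyperplane for $n \le 7$ by the Simons--De Giorgi theory, so the asymptotic flatness hypothesis of Theorem \ref{TM} holds. Note that the final contradiction step is not even needed, since Theorem \ref{TM} (in the form of Theorem \ref{planelike}) only requires flatness along \emph{some} sequence $l_k \to \infty$, which the convergent subsequence already provides.
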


Another consequence of Theorem \ref{TM} is the following version of De Giorgi's conjecture.

\begin{thm}{\label{8min}}
Let $u \in C^2(\mathbb{R}^n)$ be a solution of
\begin{equation}{\label{8min1}}
\triangle u = W'(u),
\end{equation}
such that
\begin{equation}{\label{8min2}}
|u| \le 1, \quad \partial_n u>0, \quad \lim_{x_n \to \pm
\infty}u(x',x_n) =\pm 1.
\end{equation}

Then $u$ is one-dimensional if $n \le 8$.

\end{thm}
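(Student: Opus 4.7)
The plan is to reduce Theorem \ref{8min} to Theorem \ref{TM} by verifying its two hypotheses for the monotone solution $u$: that $u$ is a global minimizer of $J$, and that its zero level set is asymptotically flat at infinity. The monotonicity assumption $\partial_n u>0$ together with the limits at $\pm\infty$ is what buys the extra dimension, lifting the conclusion from $n\le 7$ (general minimizers) to $n\le 8$.

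First I would prove that any $u$ satisfying \eqref{8min1}--\eqref{8min2} is automatically a global minimizer of $J$. The standard argument (Alberti--Ambrosio--Cabr\'e style) uses the vertical translates $u_t(x',x_n):=u(x',x_n+t)$ as a calibrating foliation: by $\partial_n u>0$ they foliate the strip $\{|u|<1\}$, and each $u_t$ solves the same equation. Given a compact set $K$ and a competitor $v$ agreeing with $u$ on $\partial K$ (which we may assume satisfies $|v|\le 1$, since truncation only lowers the energy), one defines $t(x)$ by $v(x)=u_{t(x)}(x)$ and compares $J(v,K)$ to $J(u,K)$ via the foliation. The pointwise inequality $\tfrac12|\nabla v|^2+W(v)\ge \nabla u_{t(x)}\cdot \nabla v + W'(u_{t(x)})\cdot \text{(stuff)}$, integrated and combined with the PDE satisfied by $u_t$, yields $J(u,K)\le J(v,K)$. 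Thus $u$ is a global minimizer.

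Next I would argue that in dimensions $n\le 8$ the zero level set $\{u=0\}$ is asymptotically flat at infinity. Since $\partial_n u>0$ and $u\to\pm 1$ as $x_n\to\pm\infty$, the set $\{u=0\}$ is the graph of a single function $x_n=g(x')$ over $\mathbb{R}^{n-1}$. By the Modica $\Gamma$-convergence result (Theorem \ref{l1conv}) applied to the rescalings $u_R(x):=u(Rx)$, together with the Caffarelli--Cordoba density estimate for level sets, any subsequence of the blowdowns $R^{-1}\{u=0\}$ converges locally uniformly to the boundary of a set of minimal perimeter in $\mathbb{R}^n$. The graph property over $\mathbb{R}^{n-1}$ passes to the limit, so the blowdown limit is an entire minimal graph in $\mathbb{R}^n$. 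By the Bernstein theorem (Simons, together with Bombieri--De Giorgi--Giusti on the sharpness in $n-1=8$), every entire minimal graph over $\mathbb{R}^{n-1}$ with $n-1\le 7$ is a hyperplane. Hence every blowdown limit of $\{u=0\}$ is a hyperplane, which is precisely asymptotic flatness. Theorem \ref{TM} then forces $u$ to be one-dimensional.

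The main obstacle I anticipate is the minimality step: the calibration computation with the translated foliation must be executed carefully, since the parameter $t(x)$ is only defined where $|v|<1$, and one needs to handle the boundary behaviour of the foliation without losing the sharp energy inequality. Once minimality is established, the geometric step is essentially an invocation of two classical facts (Modica's $\Gamma$-convergence and Bernstein's theorem through dimension 8), and the appeal to Theorem \ref{TM} is immediate.
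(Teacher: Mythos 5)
Your proposal follows essentially the same route as the paper, whose proof of this theorem is just the two-line reduction to Theorem \ref{TM}: the $\pm 1$ limits together with $\partial_n u>0$ imply $u$ is a global minimizer, and the graph property of $\{u=0\}$ gives asymptotic flatness at infinity when $n\le 8$. Your calibration argument for minimality and the blowdown (Modica $\Gamma$-convergence, density estimates, Bernstein) argument for flatness are precisely the standard facts the paper invokes implicitly in that reduction.
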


The dimension $n=8$ in Theorem \ref{8min} is optimal, and 
Del Pino, Kowalczyk and Wei constructed a nontrivial solution of (\ref{8min1}), (\ref{8min2}) in \cite{DKW}.

\

The $\pm 1$ limit assumption implies that $u$ is a global minimizer in $\mathbb{R}^n$. Since $\{u=0\}$ is a graph, it is asymptotically flat in dimension $n \le 8$ and Theorem \ref{TM} applies.

Similarly we see that if
the $0$ level set is a graph in the $x_n$ direction
 which has a one sided linear bound at
  $\infty$ then the conclusion is true in any dimension.

\begin{thm}
 If $u$ satisfies (\ref{8min1}), (\ref{8min2}) and $$\{u=0\} \subset \{x_n < C(1+|x'|)\}$$ then $u$ is one-dimensional.
\end{thm}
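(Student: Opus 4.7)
The plan is to reduce the statement to Theorem \ref{TM} by checking that $\{u=0\}$ is asymptotically flat at infinity; the one-sided linear bound will be used exactly to eliminate nontrivial tangent cones. As in Theorem \ref{8min}, the assumptions $\partial_n u>0$ and $u(x',\pm\infty)=\pm 1$ force $u$ to be a global minimizer of $J$, and monotonicity yields $\{u=0\}=\{x_n=g(x')\}$ for some smooth $g:\mathbb{R}^{n-1}\to\mathbb{R}$ with $g(x')<C(1+|x'|)$.

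The first step is a blowdown. Setting $u_R(x)=u(Rx)$ and $g_R(x')=R^{-1}g(Rx')$, one has $g_R(x')<C/R+C|x'|$. By Modica's $\Gamma$-convergence (Theorem \ref{l1conv}) combined with the Caffarelli--Cordoba density estimate, along any sequence $R_k\to\infty$ a subsequence can be extracted for which $\{u_{R_k}<0\}\to E$ in $L^1_{loc}$, with $E$ a global minimizer of perimeter in $\mathbb{R}^n$, and simultaneously $\{u_{R_k}=0\}\to\partial E$ locally in Hausdorff distance. Blowdowns of global perimeter minimizers are cones, hence $E$ is a cone with vertex at the origin. The subgraph structure in the $x_n$-direction and the upper bound both pass to the limit, so $E=\{x_n<g_\infty(x')\}$ with $g_\infty$ continuous (by the density estimates), one-homogeneous, and satisfying $g_\infty(x')\le C|x'|$.

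The crux is a rigidity statement: a minimal cone that is globally a graph in one direction must be a hyperplane. Since $\partial E$ is a globally defined minimal graph, interior regularity for the minimal surface equation gives $g_\infty\in C^\infty(\mathbb{R}^{n-1})$; and a $C^1$ one-homogeneous function on $\mathbb{R}^{n-1}$ is linear (its gradient is zero-homogeneous and continuous at the origin, hence constant). Therefore $\partial E$ is a hyperplane, every blowdown of $\{u=0\}$ is flat, $\{u=0\}$ is asymptotically flat at infinity, and Theorem \ref{TM} concludes that $u$ is one-dimensional. The main technical obstacle is propagating the subgraph structure and the one-sided bound to the blowdown limit with enough regularity (in particular, ruling out $g_\infty\equiv -\infty$ in some direction); once that is in place, the rigidity for minimal graph cones is a short consequence of regularity theory.
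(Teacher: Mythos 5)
Your overall strategy is the one the paper intends (the paper disposes of this theorem in one sentence by the same reduction: blowdowns of $\{u=0\}$ are minimal surfaces, the graph property plus the one-sided linear bound should force them to be hyperplanes, and then Theorem \ref{TM} applies), and your observation that a one-homogeneous $C^1$ function is linear is correct. But there is a genuine gap exactly at the rigidity step, and it is the crux rather than a technicality. First, the assertion that $E=\{x_n<g_\infty(x')\}$ with $g_\infty$ \emph{continuous and finite} ``by the density estimates'' is not justified: the density estimates give local Hausdorff convergence of $\{u_{R_k}=0\}$ to $\partial E$, but they do not prevent the blowdown subgraph from degenerating, i.e. $g_\infty\equiv-\infty$ on a nontrivial cone of directions in $x'$ (the hypothesis $\{u=0\}\subset\{x_n<C(1+|x'|)\}$ is only an upper bound, so it excludes $+\infty$ values but says nothing about downward escape). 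In that degenerate case $\partial E$ contains vertical, cylinder-like pieces and there is no entire function to which any regularity theory could be applied, so your Claim that ``$\partial E$ is a globally defined minimal graph'' is unproven. This cannot be brushed aside by the cone bound alone: after a rotation the Simons cone in $\mathbb{R}^8$ is contained in $\{x_n\le |x'|\}$, so the one-sided bound by itself is not rigidifying, and the subgraph structure must enter in an essential, quantitative way -- which is precisely the step you defer (``ruling out $g_\infty\equiv-\infty$ in some direction''). As written, the proof of the key classification (every blowdown is a hyperplane, in every dimension) is therefore incomplete.

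Second, even in the nondegenerate case the phrase ``interior regularity for the minimal surface equation gives $g_\infty\in C^\infty$'' skips a real step: a priori $g_\infty$ is only a measurable function whose subgraph minimizes perimeter; it is not known to be a weak solution of the minimal surface equation in any standard sense, so Schauder/De Giorgi--Nash theory does not apply directly. What is needed is the De Giorgi--Miranda theory of minimizing subgraphs (local boundedness, then the interior gradient estimate, then analyticity; see Giusti's book, Ch.~14--16, or the Bombieri--De Giorgi--Miranda gradient estimate), and with the one-sided linear growth $g_\infty\le C|x'|$ the gradient estimate plus Moser's Bernstein-type theorem already yields that $g_\infty$ is affine in all dimensions, without even passing to a homogeneous blowdown. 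So the smooth case can be closed by a precise citation, but the degenerate case (vertical pieces in the limit) still has to be excluded -- for instance via the structure theory of generalized (quasi-)solutions, where $\{g_\infty=-\infty\}$ is itself a perimeter minimizer, combined with the cone structure and $0\in\partial E$ -- and until that is done the reduction to Theorem \ref{TM} is not complete.
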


The results of this paper can be easily extended to more general potentials  $W$ which include the type
$$W(s)=(1-s^2)^\alpha, \quad \quad \mbox{for some} \quad \alpha \ge 0.$$
The methods are quite flexible and they can be applied for nonlinear operators like the $p$-Laplacian (see \cite{VSS}) or even fully nonlinear operators (\cite{S3,DS}).

The proof of the main result Theorem \ref{TM} is based on a non variational proof of De Giorgi's flatness theorem for classical minimal surfaces given in \cite{S2}. The key step is to obtain a Harnack type inequality for flat minimal surfaces. In the setting of phase transitions this corresponds to a Harnack inequality for the level set $\{u=0\}$. In \cite{S1} this was done by an ABP-estimate for the solution $u$ in the strip $|u|< 1/2$. The observation in this paper is that one can use barriers and work directly with the level set $\{u=0\}$ as if it were solving an equation by itself. 

The paper is organized as follows. In Section 2 we show that $\{u=0\}$ satisfies certain properties which are reminiscent to viscosity solutions of second order equations. In Section 3 we prove the Harnack inequality for the level set. We use compactness arguments and obtain our main result Theorem \ref{TM} in Section 4. In Sections 5 and 6 we collect a few lemmas concerning radial barriers and a version of weak Harnack inequality which are used in the proof of the main theorem in Sections 2 and 3.    

\section{Estimates for $\{u=0\}$}

In this section we derive properties of the level sets of solutions to
\begin{equation}{\label{2min1}}
\triangle u = W'(u),
\end{equation}  
which are defined in large domains. We denote by $g: \R \to (-1,1)$ the one-dimensional solution to \eqref{2min1} :
$$g''=W'(g), \quad \quad g'>0, \quad \lim_{t \to \pm \infty} g(t)=\pm 1.$$ 

In the next lemma we find radial approximations to the one-dimensional solution $g$. Its proof is a simple ODE exercise and we postpone it till Section \ref{radial}.

\begin{lem}[Radial approximations]\label{l1}
There exists a $C^2$ piecewise approximation $g_R: \R \to [-1,1]$ of the 1D solution $g$ such that

1) $g_R$ is nondecreasing, $g_R$ is constant in the intervals $(- \infty, -\frac R 2]$  and $[\frac R 2, \infty)$,
$$g_R(0)=0, \quad \quad g_R(\frac 12 R)=1, \quad \quad |g-g_R| \le \frac C R \quad \mbox{in $[-4,4]$},$$

2) the radial function $$\phi_R (x):=g_R(|x|-R)$$ satisfies
$$\triangle \phi_R \le W'(\phi_R) + \frac C R \chi_{B_{R+1} \setminus B_{R-1}}.$$ 
\end{lem}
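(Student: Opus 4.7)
The plan is to build $g_R$ by piecewise modification of $g$, exploiting the exponential decay $|g'(t)| + (1 - |g(t)|) \le C e^{-c|t|}$ that follows from $W''(\pm 1) > 0$. Writing $r = |x|$ and $t = r - R$, one has
$$\triangle \phi_R(x) = g_R''(t) + \frac{n-1}{r}\, g_R'(t),$$
so the only obstruction to $\phi_R$ being a $1$D supersolution is the spherical term $\frac{n-1}{r}\, g_R'(t)$, which is $\le C/R$ times $g_R'$ when $r \approx R$. The goal is to absorb this term either into the allowed annular error $\frac{C}{R}\chi_{B_{R+1}\setminus B_{R-1}}$ or into a small correction exploiting the convexity $W''(\pm 1) > 0$.

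I would define $g_R$ on $[0, R/2]$ in three zones, extending symmetrically to $[-R/2,0]$. In \emph{Zone A} ($|t| \le 2$), set $g_R := g$; then $g_R'' - W'(g_R) = 0$ identically, and the remaining $\frac{n-1}{r}g'(t)$ is $\le C/R$ on $B_{R+1}\setminus B_{R-1}$, which is exactly the error permitted. In \emph{Zone B} ($2 \le |t| \le R/2 - 1$), perturb $g$ by a correction of size $O(1/R)$ --- for concreteness a phase shift $g_R(t) := g(t-\tau(t))$ with a smooth, suitably odd $\tau$ vanishing together with its first derivative near $t=0$ and with $\tau' = O(1/R)$ in the bulk of the zone --- chosen so that Taylor expansion together with the asymptotic $|g''(t)|/g'(t) \to \sqrt{W''(\pm 1)}$ yields the pointwise strict supersolution inequality
$$g_R''(t) - W'(g_R(t)) + \frac{n-1}{r}\, g_R'(t) \le 0,$$
with no extra error. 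In \emph{Zone C} ($R/2 - 1 \le |t| \le R/2$), glue smoothly to the constants $\pm 1$ via a nondecreasing $C^2$ bump; since $1 - |g(R/2 - 1)| \le C e^{-cR}$, the gluing data has $C^2$-norm exponentially small in $R$, and the inequality persists because $W''(\pm 1) > 0$ dominates every such exponentially small error.

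The main obstacle is calibrating the Zone B correction: the perturbation must simultaneously (i) kill the full $\frac{n-1}{r}g'(t)$ term by using the curvature of $W$, (ii) keep $g_R$ nondecreasing and within $C/R$ of $g$ on $[-4,4]$, and (iii) match $C^2$ at the interfaces with Zones A and C. These three conditions trade off in a one-dimensional ODE inequality whose coefficients depend only on $n$ and the decay rate $\sqrt{W''(\pm 1)}$, which can be solved by a correction of linear type damped into a bump near the endpoints. This is the routine computation the paper defers to Section \ref{radial}.
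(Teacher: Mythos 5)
Your Zone C gluing at the $-1$ end is the fatal step. You attach $g_R$ to the constant $-1$ on $(-\infty,-R/2]$ and argue that since the matching data is $O(e^{-cR})$ in $C^2$, the inequality survives because $W''(-1)>0$. The logic is backwards there: near $t=-R/2$ no error term is available (this is far outside $B_{R+1}\setminus B_{R-1}$), so setting $v:=g_R+1\ge 0$ you need $v''\le W'(-1+v)-\frac{2(n-1)}{R}v'\le \beta^2 v$ for $v$ small (say $\beta^2=2W''(-1)$, using $v'\ge 0$). With your $C^2$ (even just $C^1$) matching to the constant, $v(a)=v'(a)=0$ at the junction $a=-R/2$, and the quantity $E=(v')^2-\beta^2v^2$ satisfies $E'=2v'(v''-\beta^2v)\le 0$, $E(a)=0$, hence $v'\le\beta v$ and by Gronwall $v\equiv 0$: a nondecreasing supersolution cannot detach from the nondegenerate stable equilibrium $-1$ at all, no matter how exponentially small the target data is (the ``dominating'' term $W''(-1)v$ is of the same exponentially small order as what it must dominate). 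Replacing the $C^1$ junction by a kink does not rescue this, since a convex corner is touched from below by $C^2$ functions with arbitrarily large second derivative and destroys exactly the comparison property that Lemma \ref{l2} extracts from Lemma \ref{l1}. Note the statement never asks for $g_R=-1$ at the bottom — only $g_R(0)=0$, $g_R(\frac12 R)=1$ and $|g-g_R|\le C/R$ on $[-4,4]$ — and the paper's construction (Lemma \ref{o1}) exploits precisely this asymmetry: its $g_R$ is constant equal to $-1+C_1/R$ on the left, where $W'>0$ gives room, and only the top end reaches the exact value $1$. The paper gets the whole lemma by passing to the phase plane, $g_R'=\sqrt{2h_R(s)}$ with $s=g_R(t)$, so the supersolution condition becomes the first-order inequality \eqref{100} for $h_R$, solved by the explicit modification $h_R=W$ lowered by a linear-in-$s$ term of slope $C_2/R$ below and raised similarly above, after which $g_R=H_R^{-1}$.

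There are two further gaps in Zones A and B. Zone A ($|t|\le 2$) is wider than the annulus carrying the error $\frac CR\chi_{B_{R+1}\setminus B_{R-1}}$: for $1<|t|\le 2$ you have $g_R=g$, so $\triangle\phi_R-W'(\phi_R)=\frac{n-1}{|x|}\,g'(t)>0$ with no error allowed, and the inequality simply fails; shrinking Zone A to $|t|\le 1$ then clashes with your requirement $\tau=\tau'=\tau''=0$ at the interface, since the correction must already be effective at $|t|=1^+$. More seriously, the pure phase shift gives $g_R''-W'(g_R)=g''(t-\tau)(\tau'^2-2\tau')-g'(t-\tau)\tau''$, so the usable main term carries the sign of $g''=W'(g)$, and your calibration needs $g''/g'$ to be negative on the whole right part of Zone B and positive on the left part. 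For the general $W$ admitted by the paper (no sign assumption on $W'$ inside $(-1,1)$), $g''$ vanishes and changes sign at the interior critical values of $W$, which $g$ crosses at fixed times that may well lie outside $[-1,1]$; near a crossing from $g''>0$ to $g''<0$ you need $\tau'$ to decrease through the crossing, while the inequality there degenerates to $-\tau''+\frac{n-1}{r}(1-\tau')\le 0$, forcing $\tau''>0$ — a contradiction. This is exactly why the paper perturbs the speed profile $h=\frac12(g')^2$ in the $s$ variable instead of the phase: a correction with $s$-derivative $C_2/R$ beats the drift term $\frac{C}{R}\sqrt{h_R}\le\frac CR\sqrt{\max W}$ uniformly, with no sign condition on $W'$.
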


Next we obtain estimates near a point on $\{u=0\}$ which admits a one-sided tangent ball of large radius $R$.

\begin{lem}[Estimates near a contact point] \label{l2}
Assume that the graph of $\phi_R$ touches by above the graph of $u$ at a point $(y, u(y))$ and let $\pi(y)$ be the projection of $y$ onto the sphere $\p B_R=\{ \phi_R=0\}$. Then in $B_1(\pi(y))$

1) $\{u=0\}$ is a smooth hypersurface with curvatures bounded by $\frac C R$ which stays in a $\frac C R$ neighborhood of $\p B_R$.  

2) $$|u-g(x \cdot \nu -R)| \le \frac CR, \quad \quad \nu:= \pi(y)/R.
$$

\end{lem}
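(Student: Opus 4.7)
After translating and rotating I may assume $\phi_R$ is centered at the origin and $\pi(y) = R e_n$, so $\nu = e_n$. The plan is to prove the sandwich bound
$$|u(x) - g(x_n - R)| \le C/R \quad \text{on } B_1(R e_n),$$
which is exactly assertion 2), and then to deduce 1) from it by elliptic regularity and the implicit function theorem.

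The upper half of the sandwich is almost immediate. A Taylor expansion gives $|x| - R = (x_n - R) + |x'|^2/(2R) + O(R^{-2})$ for $x \in B_2(R e_n)$, so combining the Lipschitz bound on $g$ with the pointwise estimate $|g - g_R| \le C/R$ from Lemma \ref{l1} yields $\phi_R(x) \le g(x_n - R) + C/R$ on $B_2(R e_n)$, and the touching inequality $u \le \phi_R$ transfers this to $u$.

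The matching lower bound is the main obstacle, and I would prove it by compactness and rigidity. If it fails there exist solutions $u_k$, radii $R_k$, and contact points $y_k$ for which $R_k \cdot \inf_{B_1(R_k e_n)}\bigl(u_k(x) - g(x_n - R_k)\bigr) \to -\infty$. Translating so the projection of the contact point sits at the origin, $\phi_{R_k}$ converges locally uniformly to the half-space profile $g(x_n)$, and a diagonal subsequence of $u_k$ converges to an entire bounded solution $u_\infty$ of $\Delta u = W'(u)$ satisfying $u_\infty \le g(x_n)$ with equality at the origin. The strong maximum principle applied to the nonnegative function $g(x_n) - u_\infty$, which solves a linear elliptic equation $\Delta w = W''(\eta) w$ with bounded coefficient on any strip where $g$ is strictly in $(-1,1)$, then forces $u_\infty \equiv g(x_n)$ and produces the contradiction. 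To upgrade this qualitative comparison to the sharp $O(R^{-1})$ rate, I would linearize: writing $u = g(x_n - R) + v$, the increment solves $\Delta v - W''(g(x_n - R)) v = O(v^2) + O(R^{-1})$, and interior elliptic estimates on the transition strip, together with the one-point control $v(y) = O(R^{-1})$ inherited from the exact touching $u(y) = \phi_R(y)$, yield $\|v\|_{L^\infty(B_1(R e_n))} \le C/R$.

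With the sandwich estimate in hand, interior Schauder estimates promote it to $\|u - g(x_n - R)\|_{C^{2,\alpha}(B_{1/2}(R e_n))} \le C/R$. Since $g'(0) > 0$, the implicit function theorem presents $\{u=0\} \cap B_1(R e_n)$ as a graph $x_n = R + \psi(x')$ with $\|\psi\|_{C^{2,\alpha}} \le C/R$, whose principal curvatures differ from those of $\partial B_R$ (which equal $1/R$) by $O(R^{-1})$, yielding 1). The main difficulty in the whole argument is the quantitative lower bound: turning soft compactness into a genuine $1/R$ rate requires the linearization step and careful bookkeeping of the $O(v^2)$ and $O(R^{-1})$ error terms.
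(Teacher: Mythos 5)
Your setup (reduce to the sandwich bound $|u-g(x_n-R)|\le C/R$ near $\pi(y)=Re_n$, then upgrade by Schauder and the implicit function theorem) and your upper bound $u\le \phi_R\le g(x_n-R)+C/R$ are fine and match the paper. The genuine gap is in your lower bound. The compactness/rigidity step can only give qualitative closeness (you acknowledge this: $R_k\inf(u_k-g)\to-\infty$ is compatible with $\inf(u_k-g)\to 0$, so the limit argument yields no rate), so everything hinges on your linearization step, and that step as written does not work: from the equation $\Delta v - W''(g(x_n-R))v = O(v^2)+O(R^{-1})$ and the single-point bound $|v(y)|\le C/R$, ``interior elliptic estimates on the transition strip'' do not give $\|v\|_{L^\infty(B_1)}\le C/R$. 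Interior estimates control a solution by its $L^\infty$ or $L^2$ norm on a larger set, never by its value at one point; moreover the zeroth-order coefficient $W''(g)$ is negative near the zero level, so there is no maximum principle to invoke, and a solution of such a linear equation can vanish at a point while being of order one nearby. Something more is needed to propagate the one-point smallness.

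The missing ingredient is a sign. Shift the one-dimensional solution so that it dominates $u$: by Lemma \ref{l1}, $u\le \phi_R\le g\!\left(x_n-R+\tfrac{C'}{R}\right)=:v$ in $B_3(Re_n)$, while at the contact point $v(y)-u(y)=v(y)-\phi_R(y)\le C''/R$. Then $w:=v-u\ge 0$ solves the linear equation $\Delta w=a(x)w$ with $a(x)=\int_0^1 W''\bigl(tu+(1-t)v\bigr)\,dt$ bounded, and the Harnack inequality for nonnegative solutions of such equations converts the one-point bound $w(y)\le C''/R$ into $w\le C/R$ on $B_{5/2}(Re_n)$; this two-sided bound is exactly what your linearization was supposed to produce, and it is how the paper proceeds (your quadratic error term also disappears, since $v$ is an exact solution, not merely an approximate one). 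After that, your final step — Schauder to get $\|u-v\|_{C^{2,\alpha}}\le C/R$, then the implicit function theorem using $g'(0)>0$ to read off the graph, curvature, and distance-to-$\partial B_R$ statements in 1) — is correct and is the same as the paper's. So the repair is local: replace ``interior estimates plus one-point control'' by ``one-sided comparison with the shifted 1D solution plus Harnack for the nonnegative difference.''
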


\begin{proof}
By Lemma \ref{l1}, $\phi_R$ is a supersolution outside $B_{R+1} \setminus B_R$, hence $y$ must belong to this annulus. Assume for simplicity that $y$ is on the positive $x_n$ axis and therefore $\pi(y)=Re_n$, $|y-Re_n| \le 1$. By Lemma \ref{l1} we have
$$u(x) \le \phi_R (x) \le g \left (x_n-R + \frac{C'}{R} \right)=:v \quad \quad \mbox{in} \quad B_{3}(R e_n),$$
$u$ and $v$ solve the same equation, and $$v(y)-u(y) \le \frac {C''}{ R}.$$ Since $v-u \ge 0$ solves the equation
$$\triangle (v-u)= a(x)(v-u), \quad \quad a(x):= \int_0^1 W''(tu(x) + (1-t)v(x)) dt,$$
we obtain $$|v-u| \le \frac C R \quad \quad \mbox{in} \quad B_{5/2}(R e_n),$$
from Harnack inequality. Moreover since $a$ has bounded Lipschitz norm we obtain 
$$\|u-v\|_{C^{2,\alpha}(B_2(Re_n))} \le \frac C R,$$
by Schauder estimates and this easily implies the lemma.

\end{proof}

\begin{rem}\label{r1}
We remark that if $\phi_R \ge u $ and $u(y) \le \frac K R + \phi_R(y)$ for some constant $K$ and some $y$ in the annulus $B_{R+1} \setminus B_{R-1}$ then the conclusion remains true for all large $R$ after replacing $\frac C R$ by $\frac{C(K)}{R}$. Here $C(K)$ represents a constant which depends also on $K$.
\end{rem}

A corollary of Lemma \ref{l2} is the following key proposition.
\begin{lem}\label{l3}
Assume that 

a) $B_R(-Re_n) \subset \{u<0\}$ is tangent to $\{u=0\}$ at $0$,

b) there is $x_0 \in B_{R/4}(-Re_n)$ such that $u(x_0) \le -1+c$ for some $c>0$ small. 

\noindent
Then in $B_1$ we have

1)  $\{u=0\}$ is smooth and has curvatures bounded by $\frac C R$.

2) $|u-g(x_n)| \le \frac C R$.

\end{lem}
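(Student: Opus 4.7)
My plan is to reduce Lemma \ref{l3} to Lemma \ref{l2} by applying it to the radial supersolution $F(x) := \phi_R(x + Re_n)$, whose zero sphere is exactly $\partial B_R(-Re_n)$. By hypothesis (a), $F(0) = u(0) = 0$, i.e.\ the graph of $F$ touches the graph of $u$ at the origin, and the origin lies in the annulus $B_{R+1}(-Re_n) \setminus B_{R-1}(-Re_n)$ relevant for Lemma \ref{l2}. All that remains is to verify that $F \ge u$ on a neighborhood of the origin large enough for the proof of Lemma \ref{l2} to run.

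The only obstruction to the comparison $F \ge u$ is the inner region $B_{R/2}(-Re_n)$, on which $F \equiv -1$ while $u > -1$ strictly; this is precisely the role of hypothesis (b). Starting from $u(x_0) \le -1 + c$, the function $v := u + 1 \ge 0$ solves a linear elliptic equation $\triangle v = a(x) v$ with bounded coefficient $a(x) = \int_0^1 W''(-1 + t v(x))\, dt$, so Harnack's inequality gives $u + 1 \le Cc$ on $B_1(x_0)$, which lies well inside $B_{R/2}(-Re_n)$ since $x_0 \in B_{R/4}(-Re_n)$.

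Next, I compare $F$ with $u$ on the annular domain $\Omega := B_{R+2}(-Re_n) \setminus \overline{B_1(x_0)}$. On the outer boundary $\partial B_{R+2}(-Re_n)$ we have $F = 1 \ge u$, so $u - F \le 0$; on the inner boundary $\partial B_1(x_0)$ we have $F = -1$ and $u \le -1 + Cc$, so $u - F \le Cc$. Inside $\Omega$ the difference $w := u - F$ satisfies $\triangle w \ge a(x) w - (C/R)\chi_{B_{R+1}(-Re_n) \setminus B_{R-1}(-Re_n)}$, where the coefficient $a(x) = \int_0^1 W''(tu + (1-t)F)\, dt$ is bounded. Although $a$ is not sign-definite in the transition strip where $u, F$ pass through $(-1/2, 1/2)$, combining the boundary data with the weak Harnack inequality collected in Section 6 yields $w \le C(c + 1/R)$ throughout $\Omega$.

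Consequently $F + K/R \ge u$ in a full neighborhood of the origin for some $K = K(c)$, while the exact equality $u(0) = F(0) = 0$ at $y = 0$ in the annulus gives the near-touching hypothesis of Remark \ref{r1} with constant $K$. Invoking Remark \ref{r1} then delivers the smoothness of $\{u=0\}$, the $C/R$ curvature bound, and the estimate $|u - g(x_n)| \le C/R$ on $B_1$. The main obstacle I expect is precisely the global bound on $w$ on $\Omega$: because the zero-order coefficient $a(x)$ changes sign inside the transition region, no standard maximum principle applies, and one is forced to rely on the weak Harnack inequality of Section 6 — which is exactly the ingredient that replaces the ABP-based arguments of \cite{S1}.
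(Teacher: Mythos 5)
The reduction you propose hinges entirely on the comparison $F\ge u$ (or $F+K/R\ge u$) in a large neighborhood of $\partial B_R(-Re_n)$, and the argument you give for it does not work. First, a small point: on $\partial B_{R+2}(-Re_n)$ one has $F=g_R(2)<1$, not $F=1$ ($\phi_R$ equals $1$ only at distance $\ge R/2$ outside the zero sphere), so your outer boundary inequality already fails as stated, though that could be repaired by enlarging the annulus. The essential gap is the interior step: you need $w=u-F\le C(c+1/R)$ on a domain of size $\sim R$ for a function satisfying $\triangle w\ge a(x)w-\frac CR\chi_{B_{R+1}\setminus B_{R-1}}$ with $a$ of no sign, and you invoke ``the weak Harnack inequality of Section 6'' to get it. But Proposition \ref{mem} (and Proposition \ref{p3}, Lemma \ref{ABP}) is not a PDE estimate: it is a measure-theoretic statement about a set $\Gamma\subset\R^{n+1}$ satisfying the geometric touching property $(P)$, used for the level set $\{u=0\}$, and it cannot control the difference of two solutions of the semilinear equation. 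Since $a$ changes sign in the transition region, no maximum principle is available either, and indeed the bound you want is essentially equivalent to the conclusion of the lemma (it says $u$ lies below the shifted one-dimensional profile along the whole sphere), so asserting it at this stage is circular. Note also that hypothesis (a) only constrains $u$ inside $B_R(-Re_n)$; nothing a priori prevents $u$ from exceeding $F$ at points outside the sphere far from the origin, so ``the only obstruction is the inner region'' is not correct.

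The paper avoids ever comparing $u$ with the big sphere's profile. It slides the smaller radial supersolutions $\phi_{R/2,z}$ upward along the $x_n$-axis and analyzes the \emph{first} contact point: Lemma \ref{l2} forces $\{u=0\}$ to lie within $C/R$ of the sliding sphere near the contact point, and since $\{u=0\}$ cannot enter $B_R(-Re_n)$, the contact is localized in a bounded neighborhood of the origin; the estimates are then propagated to $B_1$ by chaining Lemma \ref{l2} and Remark \ref{r1} over finitely many unit balls. The initial ordering $\phi_{R/2,-Re_n}\ge u$ is not obtained from a comparison principle but from hypothesis (b): Harnack gives $u$ close to $-1$ on a fixed large ball $B_{R_0}(x_0)$, so $\phi_{R_0/2,x_0}>u$, and one deforms continuously $\phi_{r,z}$ (moving $z$ to $-Re_n$, then increasing $r$ to $R/2$), using Lemma \ref{l2} again to rule out any touching during the deformation because the zero spheres of these barriers stay at distance $\sim R$ from $\{u=0\}$. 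You would need to replace your comparison step by such a sliding/first-contact argument (or by some genuinely new ingredient, e.g. a Modica-type gradient bound, which the paper does not use and which would still not handle the interior region without hypothesis (b)); as written, the key inequality is unjustified.
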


\begin{proof}
Assume first that the function $$\phi_{R/2,z}(x):=\phi_{ R/2}(x-z)$$ for $z =-Re_n$ is above $u$. 
We translate the graph of $\phi_{R/2,z}$ by moving $z$ continuously upward on the $x_n$ axis. We stop when the translating graph becomes 
tangent by above to the graph of $u$ for the first time. Denote by $(y,u(y))$ the contact point and by $z^*$ the final center $z$ and by $\pi(y)$ the projection of $y$ onto $\p 
B_{R/2}(z^*)$. By Lemma \ref{l2}, $\{u=0\}$ must be in a $\frac{C_1}{R}$ neighborhood of $\p B_{R/2}(z^*) \cap B_1(\pi( y))$ for some $C_0$ universal. This implies
$$z^* = t e_n \quad\mbox{ with} \quad  \quad t \in  [- R / 2 - C_1/ R ,- R / 2 ].$$    
Moreover, $\pi (y) \in B_{C_2}$ since otherwise $\pi (y)$ is at a distance greater than $\frac{1}{R} \frac{C_2^2}{8} > \frac{C_1}{R}$ in the interior of the ball $B_{R}(-R e_n)$, hence $\{u=0\}$ must intersect this ball and we reach a contradiction. 

Now we connect $\pi(y)$ with $0$ by balls and apply Lemma \ref{l2} and Remark \ref{r1} a fixed number of times, and obtain 
the conclusion of the lemma.

It remains to show the existence of an initial function $\phi_{R/2,z}$ above $u$. By hypothesis b) and Harnack inequality we see that $u$ is still sufficiently close to $-1$ in a whole ball $B_{R_0}(x_0)$ for some large universal $R_0$, and therefore $\phi_{R_0/2,x_0} > u$ provided that $c$ is sufficiently small. Now we deform $\phi_{R_0/2,x_0}$ by a continuous family of functions $\phi_{r,z}$ and first move $z$ continuously from $x_0$ to $-R e_n$ and then we increase $r$ from $R_0$ to $R/2$. By Lemma \ref{l2} we obtain that the graphs of these functions cannot touch the graph of $u$ by above and the lemma is proved. 
    
\end{proof}

In the next lemma we prove a localized version of Lemma \ref{l3}. 

\begin{prop}\label{p1}
Assume that $u$ satisfies the equation in $B_{R^{2/5}}$ and 

a) $B_R(-Re_n) \cap B_{R^{ 2/ 5}} \subset \{u<0\}$ is tangent to $\{u=0\}$ at $0$,

b) there is $x_0 \in B_{R^{2/5}/4}(-\frac 12 R^{2 /5} e_n)$ such that $u(x_0) \le -1+c$. 

\noindent
Then in $B_1$ we have

1)  $\{u=0\}$ is smooth and has curvatures bounded by $\frac C R$.

2) $|u-g(x_n)| \le \frac C R$.

\end{prop}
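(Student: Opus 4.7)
My plan is to transplant the sliding-barrier construction from the proof of Lemma \ref{l3} to the localized setting, performing every comparison inside $B_{R^{2/5}}$. The goal is to produce a barrier $\phi_{R/2,z^*}$, with $z^*$ close to $-\frac{R}{2}e_n$, that is tangent from above to the graph of $u$ at some interior point $y\in B_{R^{2/5}}$ near the origin. Once such a contact is obtained, Lemma \ref{l2} gives the $C/R$ estimates in a unit ball around the projection of $y$ onto $\partial B_{R/2}(z^*)$, and chaining via Remark \ref{r1} across a universally bounded number of unit balls carries the estimates back to $B_1$, exactly as at the end of the proof of Lemma \ref{l3}.

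The initial small barrier is built as in Lemma \ref{l3}: by hypothesis b) together with iterated Harnack inequality on $1+u$, one has $u\le -1+o(1)$ on a universal ball $B_{R_0}(x_0)$ once $c$ is small, so $\phi_{R_0/2,x_0}\ge u$ on $B_{R^{2/5}}$. I then deform this barrier through the same continuous two-parameter family as in Lemma \ref{l3}: first slide the center $z$ along a path in $B_R(-Re_n)\cap B_{R^{2/5}}$ from $x_0$ to $-Re_n$ with the radius fixed at $R_0/2$, then enlarge the radius from $R_0/2$ to $R/2$ with the center fixed at $-Re_n$. The translation stage is handled exactly as in Lemma \ref{l3}: the path $z(t)$ is chosen to stay at distance at least $R_0/2$ from $\partial B_R(-Re_n)$, so that $B_{R_0/2}(z(t))\subset B_R(-Re_n)\cap\{u<0\}$, and any interior tangent contact would, by Lemma \ref{l2}, force $\{u=0\}$ into a $C/R_0$-neighborhood of $\partial B_{R_0/2}(z(t))$, contradicting hypothesis a). The radius-enlargement stage is in fact automatic in the localized setting: for $z=-Re_n$ and every $r\in[R_0,R/2]$, the transition annulus $\{r/4<|x+Re_n|<3r/4\}$ of $\phi_{r/2,-Re_n}$ lies entirely outside $B_{R^{2/5}}$, since $\min_{x\in\overline{B_{R^{2/5}}}}|x+Re_n|=R-R^{2/5}$ exceeds $3r/4\le 3R/8$ for $R$ large; hence $\phi_{r/2,-Re_n}\equiv 1$ on $B_{R^{2/5}}$ throughout, and $\phi\ge u$ is preserved for free.

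Having reached $\phi_{R/2,-Re_n}\ge u$ on $B_{R^{2/5}}$, I slide $z$ upward along the $x_n$-axis to locate the first tangent contact inside $B_{R^{2/5}}$. The geometric argument of Lemma \ref{l3} then applies verbatim: the stopping center satisfies $z^*=te_n$ with $t\in[-R/2-C/R,-R/2]$, and the projection of the contact point onto $\partial B_{R/2}(z^*)$ lies within a universal distance $C_2$ of the origin, which for $R$ large is safely inside $B_{R^{2/5}}$, so Lemma \ref{l2} applies. The main obstacle I anticipate in this plan is the verification, during the sliding phase, that no spurious contact forms on $\partial B_{R^{2/5}}$ before an interior contact does. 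This requires checking three things: that the negative cap $B_{R/4}(z)$ of the barrier stays disjoint from $\overline{B_{R^{2/5}}}$ throughout (which holds because $|z|\ge R/2-C/R$ dominates $R/4+R^{2/5}$), that near the top of $\partial B_{R^{2/5}}$ the barrier is identically $1$, and that near the bottom of $\partial B_{R^{2/5}}$ the region enclosed by the barrier sphere sits inside $B_R(-Re_n)\subset\{u<0\}$; combined with the near-supersolution property of $\phi_{R/2,z}$, these facts preserve $\phi\ge u$ on $B_{R^{2/5}}$ until an interior tangent contact is produced.
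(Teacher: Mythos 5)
Your overall skeleton is the paper's: slide $\phi_{R/2,te_n}$ upward, locate an interior tangency, then apply Lemma \ref{l2} and chain with Remark \ref{r1} back to $B_1$. Your observation that the whole initial deformation stage of Lemma \ref{l3} is unnecessary here is correct, and even more is true: since the transition of $g_{R/2}$ occupies only a $C\log R$ neighborhood of $\{\phi_{R/2,z}=0\}$ (Lemma \ref{o1}), the function $\phi_{R/2,-Re_n}$ is identically $1$ on $B_{R^{2/5}}$, so the starting inequality is trivial. The genuine gap is in the sliding stage itself, precisely at the "main obstacle" you flag and then dismiss. To keep $u<\phi_{R/2,te_n}$ on the lower portion of $\partial B_{R^{2/5}}$ you argue that this region lies inside $B_R(-Re_n)\subset\{u<0\}$; but $u<0$ is far too weak. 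Your identification of the "negative cap" as $B_{R/4}(z)$ is wrong: $\phi_{R/2,te_n}$ already equals its minimal constant value $-1+2C_1/R$ at depth $C\log R$ inside its zero sphere, so by the time $t$ approaches $-R/2$ the bottom cap of $\partial B_{R^{2/5}}$ sits at depth $\sim R^{2/5}\gg \log R$ and the barrier there is $\approx -1+C/R$. The hypotheses give no bound of the form $u\le -1+C/R$ near $\partial B_{R^{2/5}}$ (hypothesis b) controls $u$ at a single interior point only), so the first contact over $\overline{B_{R^{2/5}}}$ may well occur on, or within distance $4$ of, $\partial B_{R^{2/5}}$, where Lemma \ref{l2} cannot be applied (there is no room for the Harnack/Schauder argument in $B_3(\pi(y))$) and where a boundary contact yields no information. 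At that point the argument stops with no conclusion.

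What is missing is exactly the paper's key step: the quantitative upper bound \eqref{uphi}, i.e. $u<\phi_{R/2,-\frac12 Re_n}$ on the annulus $B_{\frac12 R^{2/5}}\setminus B_{\frac14 R^{2/5}}$, which (by monotonicity of $\phi_{R/2,te_n}$ in $t$ at such points) rules out any contact there throughout the slide and forces the first contact to be interior. The paper obtains it by a preliminary sliding of the medium-scale barriers $\psi_{r,z}$ with $r=\frac14 R^{2/5}$, built from the sharper approximation $\rho_r$ of Lemma \ref{o2} so that they fit inside the domain; this yields \eqref{urho}--\eqref{ug}, a bound $u<g_{R/2}(d_1+C'r^{-1})$ in terms of the distance $d_1$ to $\partial B_R(-Re_n)$, which is then converted into a bound in terms of the distance $d_2$ to $\partial B_{R/2}(-\frac12 Re_n)$ via the quadratic separation \eqref{d12}, $d_2-d_1\ge r^2/(4R)\ge C'r^{-1}$. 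This is where the exponent $2/5$ (more precisely $r^3\gtrsim R$) is used. Your proposal never invokes Lemma \ref{o2} nor uses the size $R^{2/5}$ of the domain in any essential way, which signals that the localization difficulty has been bypassed rather than resolved; without an estimate playing the role of \eqref{uphi}, the sliding argument cannot be confined to $B_{R^{2/5}}$.
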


\begin{proof}

As in Lemma \ref{l3}, we slide the graph of $\phi_{R/2,z}$ in the $e_n$ direction till it touches the graph of $u$ except that now we restrict only to the region $B_{\frac 12 R^{2/5}}$,
In order to repeat the argument above we need to show that the first contact point is an interior point and it occurs in $B_{\frac 14 R^{2/5}}$. For this it suffices to prove that
\be\label{uphi}
u(x) < \phi_{R/2,- \frac 12 R e_n} \quad \quad \mbox{in} \quad B_{\frac 12 R^{2/5}}\setminus B_{\frac 14 R^{2/5}}
\ee  

We estimate $u$ by using the functions $\psi_{R,z}$ defined as
$$\psi_{R,z}(x):=\rho_R(|x-z|-R),$$
with $\rho_R$ the approximation of the 1D solution $g$ which is constructed in Lemma \ref{o2}. Since $\psi_R$ satisfies the same properties as $\phi_R$ in Lemma \ref{l1}, we conclude that Lemma \ref{l2} holds if we replace $\phi_R$ by $\psi_R$.

Now we slide the graphs $\psi_{r, z}$ with $r:=\frac 14 R^{2/5}$ and $|z'|\le 2r$, $z_n=-2r$ upward in the $e_n$ direction. By Lemma \ref{l2} we find
$\psi_{r, z} > u$ as long as $B_r(z)$ is at distance greater than $Cr^{-1}$ from $\p B_{R}(-R e_n)$.
We obtain that in $B_{2r}$
\be\label{urho}
u(x) <  \rho_{r}(d_1(x) + C r^{-1}), 
\ee
 where $d_1(x)$ is the signed distance to $\p B_R(-Re_n)$. Using the inequality between $\rho_r$ and $g_{R/2}$ given in Lemma \ref{o2} we obtain
\be\label{ug}
u(x) < g_{R/2}(d_1(x) + C' r^{-1}) \quad \quad \mbox{in $B_{2r}$}.
\ee
Let $d_2(x)$ represent the distance to $\p B_{R/2}(-\frac 12 R e_n)$. Then in the annular region $B_{2r}\setminus B_r$ we have either
\be\label{d12}
d_2(x)-d_1(x) \ge \frac {1}{4R}r^2 \ge C' r^{-1},   
\ee
or both $d_2(x)$ and $d_1(x)+ C'r^{-1}$ belong to one of the intervals $(-\infty, - C \log R)$ or $(C \log R, \infty)$ where $g_{R/2}$ is constant.   
From \eqref{ug} we find
\be\label{ug1}
u(x) < g_{R/2}(d_2(x)) \quad \mbox{in} \quad B_{2r}\setminus B_r, 
\ee
and \eqref{uphi} is proved.

\end{proof}

Next we consider the case in which the $0$ level set of $u$ is tangent by above at the origin to the graph of a quadratic polynomial. 

\begin{prop}\label{p2}
Assume that $u$ satisfies the equation in $B_{R^{2/5}}$ and hypothesis b) of Proposition \ref{p1}. If 
$$\Gamma:=\left \{x_n= \sum_1^{n-1} \frac{a_i}{2} x_i^2 + b' \cdot x'  \right \} \quad \mbox{ with} \quad |b'| \le \eps, \quad |a_i| \le \eps^{-2} R^{-1}, $$
is tangent to $\{u=0\}$ at $0$ for some small $\eps$ that satisfies
$\eps \ge R^{-1/5}$, then
$$\sum_1^{n-1} a_i \le CR^{-1}.$$
\end{prop}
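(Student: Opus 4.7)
The plan is to reduce Proposition \ref{p2} to Proposition \ref{p1} by producing a radius-$R$ ball tangent to $\{u=0\}$ from below near the origin, then transferring the curvature bound at the contact point to the origin, and finally comparing second fundamental forms at the tangency of $\Gamma$ with $\{u=0\}$. Since $\Gamma$ touches $\{u=0\}$ at $0$ from below (so that $\{u=0\}$ lies locally above $\Gamma$), the scaling assumptions $|a_i|\le\eps^{-2}R^{-1}$, $|b'|\le\eps$ and $\eps\ge R^{-1/5}$ give uniform bounds $|\nabla P|\le 2\eps$ and $|P|\le C\eps R^{2/5}$ on $B_{R^{2/5}}$, so $\Gamma$ is mildly flat on the whole domain. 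I would then slide a ball $B_R(z)$ upward in the $e_n$-direction, starting from a configuration in which the lens $B_R(z)\cap B_{R^{2/5}}$ lies in $\{u<0\}$ (such a starting position exists by hypothesis b) and Harnack, which produce a sizeable ball about $x_0$ in $\{u<-1+c/2\}$), and continue until the ball first becomes tangent to $\{u=0\}$ from below, at some contact point $q$. The key claim at this stage is that $q$ belongs to a universally bounded set $B_{C_0}$.

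At this tangent point $q$, the configuration in coordinates translated to $q$ satisfies the hypotheses of Proposition \ref{p1}: the shifted lens is contained in $\{u<0\}$ and tangent to $\{u=0\}$ at the new origin, while hypothesis b) carries over (after adjusting $x_0$ by a bounded shift and using Harnack). Proposition \ref{p1} then gives that $\{u=0\}$ is smooth in $B_1(q)$ with principal curvatures bounded by $C/R$. Connecting $q$ to $0$ by a finite chain of overlapping unit balls and applying Lemma \ref{l2} together with Remark \ref{r1} at each step, exactly as in the final step of Lemma \ref{l3}, propagates this regularity up to the origin. At $0$ itself, $\{u=0\}$ is thus smooth with principal curvatures $\kappa_i$ satisfying $|\kappa_i|\le C/R$, and since $\Gamma$ is tangent from below, comparing second fundamental forms at the tangency gives $\sum_{i=1}^{n-1}a_i\le\sum_{i=1}^{n-1}\kappa_i\le(n-1)\,C/R$.

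The main obstacle is to show that the sliding procedure terminates with a contact point $q$ in a bounded region. A radius-$R$ sphere is not in general inscribable beneath $\Gamma$ tangent at $0$ itself, since the principal curvatures of $\Gamma$ may be as negative as $-R^{-3/5}$, which is more negative than $-1/R$. The scaling constraints $\eps\ge R^{-1/5}$ and the domain size $R^{2/5}$ are calibrated precisely so that the oscillation $C\eps R^{2/5}$ of $\Gamma$ on $B_{R^{2/5}}$, together with hypothesis b), forces the first sphere-tangency to occur in a universally bounded horizontal neighborhood of $0$ rather than escaping near $\partial B_{R^{2/5}}$; arguing this quantitatively is the technical heart of the proof and parallels the auxiliary sliding step leading to \eqref{uphi} in the proof of Proposition \ref{p1}.
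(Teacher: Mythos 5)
Your reduction to Proposition \ref{p1} does not work, and the gap is not just the sliding step you flag as ``the technical heart.'' The more basic problem is that your intermediate conclusion --- that $\{u=0\}$ is smooth at the origin with \emph{all} principal curvatures bounded by $C/R$ --- is strictly stronger than the proposition and is false under its hypotheses. Take for instance $a_1=\eps^{-2}R^{-1}=R^{-3/5}$, $a_2=-a_1$, all other $a_i=0$: the hypotheses of Proposition \ref{p2} are satisfied (and its conclusion is trivially true), but since $\{u=0\}$ lies above $\Gamma$ and is tangent to it at $0$, any $C^2$ piece of $\{u=0\}$ through $0$ must have a principal curvature at least $a_1=R^{-3/5}\gg R^{-1}$ in the $e_1$ direction. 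So no argument can yield $|\kappa_i|\le C/R$ at $0$; only the \emph{sum} $\sum a_i$ of the touching paraboloid can be controlled. This is exactly why the paper does not attempt a pointwise curvature bound at the origin: it perturbs $\Gamma$ to $\Gamma_2$ (subtracting $\frac1R|x'|^2$), shows that $g_R(d_2)$, with $d_2$ the signed distance to $\Gamma_2$, is a supersolution whenever $\sum a_i\ge C''R^{-1}$ (a mean-curvature computation for the parallel surfaces, using $|\kappa_i|\le 2R^{-3/5}$ and $|d_2|\le C\log R$), traps $u$ below $g_R(d_2)$ on the annulus $B_{2r}\setminus B_r$ via the $\psi_{r,z}$ barriers as in \eqref{104}, and then slides $g_R(d_2(\cdot-te_n))$ to force an interior contact point, contradicting the supersolution property.

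The sliding claim is also genuinely broken, not merely unproven. The ball $B_R(z)$ has radius $R\gg R^{2/5}$, so inside the domain its boundary deviates from a hyperplane by only $O(R^{-1/5})$, while $\Gamma$ (hence possibly $\{u=0\}$, which is only known to lie above $\Gamma$) can drop by as much as $\tfrac12\eps^{-2}R^{-1}\,|x'|^2\sim R^{1/5}$, plus a tilt $\eps|x'|\sim\eps R^{2/5}$, over $B_{R^{2/5}}$. Hence the first contact of the slid sphere can occur near $\partial B_{R^{2/5}}$, or on the lateral boundary of the domain with no interior tangency at all; your bound on the oscillation of $\Gamma$ is off (it is not $C\eps R^{2/5}$ in the quadratic term, and it grows with $R$). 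Even if an interior contact point $q$ existed at distance $d\sim R^{2/5}$ from the origin, chaining Lemma \ref{l2}/Remark \ref{r1} back to $0$ would need an unbounded number of steps, and each step degrades the constant ($C(K)$ grows with $K$), so the $C/R$ bound would not survive; the chaining in Lemma \ref{l3} works only because the contact point is shown to lie in a fixed ball $B_{C_2}$. The analogue of \eqref{uphi} that confines the contact point in Proposition \ref{p1} relies on comparing distances to two concentric spheres whose gap on the annulus is $\ge \frac{r^2}{4R}\ge C'r^{-1}$; with your single radius-$R$ sphere versus the paraboloid $\Gamma$ there is no such favorable separation, so that mechanism cannot be reproduced.
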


Proposition \ref{p2} states that the blow-down of $\{u=0\}$ satisfies the minimal surface equation in some viscosity sense.
Indeed, if we take $\eps=R^{-1/5}$, then the set $R^{-3/5}\{u=0\}$ cannot be touched at $0$ in a $R^{-1/5}$ neighborhood of the origin by a surface with curvatures bounded by $1/2$ and mean curvature greater than $CR^{-2/5}$.

\begin{proof}

We argue as in the proof of Proposition \ref{p1} except that now we replace $\p B_R(-R e_n)$ by $\Gamma$ and $ \p B_{R/2}(-\frac 12 R e_n)$ by
$$\Gamma_2:=\left \{x_n= \sum_1^{n-1} \frac{a_i}{2} \, x_i^2  + b' \cdot x' - \frac 1 R |x'|^2\right \}.$$
We claim that
\be\label{104}
u(x) < g_{R}(d_2(x)) \quad \mbox{in} \quad B_{2r}\setminus B_r, \quad \quad r:=\frac 14 R^{2/5},
\ee
where $d_2$ represents the signed distance to the $\Gamma_2$ surface. Using the surfaces $\psi_{r,z}$ as comparison functions we obtain as in \eqref{urho}, \eqref{ug} above that 
$$u(x)<g_R(d_1(x)+C'r^{-1}) \quad \mbox{in $B_{2r}$},$$
with $d_1(x)$ representing the signed distance to $\Gamma$. Notice that \eqref{d12} is valid in our setting. Now we argue as in \eqref{ug1} and obtain the desired claim  \eqref{104}.

Next we show that $g_R(d_2)$ is a supersolution provided that
$$\sum_1^{n-1} a_i \ge C''R^{-1},$$
for some $C''$ large, universal. If $|d_2(x)| \le C \log R$ we have
$$\triangle g_R(d_2)=g_R''(d_2) + H(x) g_R'(d_2),$$
where $H(x)$ represents the mean curvature at $x$ of the parallel surface to $\Gamma_2$. Let $\kappa_i$, $i=1,..,n-1,$ be the principal curvatures of $\Gamma_2$ at the projection of $x$ onto $\Gamma_2$. Notice that at this point the slope of the tangent plane to $\Gamma_2$ is less than $4 \eps$ hence we have
$$|\kappa_i| \le 2 \eps^{-2}R^{-1} \le 2R^{-3/5}, \quad \sum \kappa_i \le - \sum a_i + C \eps^2 \max |a_i| \le - \frac 12 C'' R^{-1}. $$ Since $ |d_2| \le C \log R,$ we obtain
$$H(x)=\sum \frac{\kappa_i}{1-d_2 \kappa_i} =\sum (\kappa_i +  \frac{d_2\kappa_i^2}{1-d_2 \kappa_i}) \le - \frac 1 4 C'' R^{-1}.$$
From the properties of $g_R$ given in Lemma \ref{o1}, we can choose $C''$ large (depending on the constant $C$ of Lemma \ref{o1} and $\min g'$ in $[-2,2]$) such that
$$\triangle g_R(d_2) \le W'(g_R(d_2))  \quad \mbox{if $|d_2| \le C \log R$.}$$
This inequality is obvious if $|d_2|\ge C \log R$, since then $g_R(d_2)$ is constant and $g'(d_2)=0$. In conclusion $g_R(d_2)$ is a supersolution in $B_{2r}$.

Now we reach a contradiction by translating the graph of $g_R(d_2)$ along the $e_n$ direction till it touches the graph of $u$ by above. Precisely, we consider the graphs of $g_R(d_2(x-te_n)) $ with $t \le 0$ and start with $t$ negative so that the function is identically 1 in $B_{2r}$. Then we increase $t$ continuously till this graph becomes tangent by above to the graph of $u$ in $\overline{B_{2r}}$. Since $u(0)=0$, a contact point must occur for some $t \le 0$, and by \eqref{104} this point is interior to $B_r$. This is a contradiction since our comparison function is a supersolution at this point.
 
\end{proof}

\section{Harnack inequality}

In this section we use Proposition \ref{p1} and prove a Harnack inequality property for flat level sets, see Theorem
\ref{TH} below. The ideas come from the proof of the classical Harnack inequality for uniformly elliptic second order linear equations due to Krylov and Safonov. The key step in the proof is to use an ABP type estimate in order to control the $x_n$ coordinate of the level set $\{u=0\}$ in a set of large measure in the $x'$-variables.

\begin{thm}[Harnack inequality for minimizers]{\label{TH}}

Let $u$ be a minimizer of $J$ in the cylinder
$$\{|x'|<l\}
\times \{ |x_n| <l \}$$ and assume that
$$0 \in \{u=0\} \subset \{ |x_n| < \theta\}. $$

Given $\theta_0>0$ there exists $\varepsilon_0(\theta_0) >0$ depending on
$n$, $W$ and $\theta_0$, such that if
$$\theta l^{-1} \le \varepsilon_0(\theta_0), \quad \theta_0 \le \theta,$$
then
$$ \{u=0\} \cap \{ |x'|< l/2 \} \subset \{
|x_n| <
(1-\delta) \theta \},$$
where $\delta>0$ is a small constant depending
on $n$ and $W$.

\end{thm}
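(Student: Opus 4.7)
I argue by contradiction. If the conclusion fails, there exists $p \in \{u=0\} \cap \{|x'| < l/2\}$ with $|p_n| \geq (1-\delta)\theta$; replacing $u$ by $u(x', -x_n)$ if necessary (still a minimizer satisfying the same hypotheses), we may assume $p_n \geq (1-\delta)\theta$. Combined with the hypothesis $0 \in \{u=0\}$ at height $0$, the level set thus oscillates by nearly $\theta$ over a horizontal distance at most $l/2$, and the goal is to rule this out using the viscosity supersolution property from Proposition \ref{p2} and the weak Harnack inequality of Section 6.

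The strategy parallels the classical Krylov--Safonov proof of Harnack's inequality: pointwise elliptic regularity is replaced by Propositions \ref{p1}--\ref{p2}, and the ABP estimate is replaced by the weak Harnack inequality of Section 6. First, I would fix a sphere radius $R = R(\theta_0)$ large enough that the $O(R^{-1})$ errors from Proposition \ref{p1} are much smaller than $\delta \theta_0$, and then choose $\eps_0(\theta_0)$ so small that $R^{2/5} \ll l$. For each $y' \in \{|y'| < l/4\}$, I would slide the ball $B_R(y', -R - \tau)$ upward by decreasing $\tau$ until it first touches the level set from below; by Proposition \ref{p1}, each first contact happens on a smooth piece of the level set within $O(R^{-1})$ of the tangent sphere, and the contact height depends in a Lipschitz way on $y'$. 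The resulting contact-height function $h$ on the base is, by Proposition \ref{p2}, an approximate viscosity supersolution of a uniformly elliptic equation (the minimal surface equation linearized around a flat graph). Feeding this, together with the large value of $h$ near $y' = p'$ and the global upper bound $\theta$, into the weak Harnack inequality of Section 6 then yields a contradiction once $\delta$ and $\eps_0(\theta_0)$ are sufficiently small, since the set on which the large sliding spheres register a first contact carries a definite fraction of the base measure.

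The main obstacle is the quantitative calibration of $R$, $\delta$, and $\eps_0(\theta_0)$ so that the error terms of order $R^{-1}$ in Proposition \ref{p1} and $R^{-2/5}$ in Proposition \ref{p2} remain negligible compared to the gap $\delta \theta$, while the scale $R^{2/5}$ on which Proposition \ref{p2} operates still fits inside the available cylinder. A secondary issue is that the level set is not a priori a graph, so a preliminary sliding argument (essentially a localized version of Lemma \ref{l3}) is needed to ensure the large spheres produce a single-valued first-contact point above $\{|y'| < l/4\}$, and to justify interpreting the contact-height function as a supersolution on the base.
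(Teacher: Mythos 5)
There is a genuine gap, and it is exactly the point the paper is careful about. Your plan is to turn the level set into a single-valued ``contact-height function'' $h(y')$ by sliding balls from below, view it as a viscosity supersolution via Proposition \ref{p2}, and then get a contradiction from the weak Harnack machinery of Section \ref{WH} alone. But $\{u=0\}$ is not a priori a graph, and no sliding argument (nor a ``localized Lemma \ref{l3}'') can make it one: sliding spheres from below only produces the \emph{lower contact envelope}, while the anchor point $0\in\{u=0\}$ (or your high point $p$) may lie on a \emph{different sheet} of the level set. With only one-sided contact information, the measure estimates give: (i) by Proposition \ref{mem}, on most of $B'_{l/2}$ the level set has low contact points touched from below (Remark \ref{memr} puts them at height $\le -3\theta/4$), and (ii) by Proposition \ref{p3} applied upside down, on a fixed fraction $\mu_0$ of the base it has high contact points touched from above (height $\ge -\theta/2$). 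These two facts are \emph{not} contradictory for a non-graph level set --- they are simultaneously realized by a configuration with two interfaces --- so the step ``feed $h$ into weak Harnack and conclude'' does not close. The paper says this explicitly: a contradiction would follow at that stage only if $\{u=0\}$ were a graph in the $e_n$ direction (or $u$ were monotone).

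The missing ingredient is the use of minimality in the final step, which your outline never invokes for the contradiction. In the paper, the overlap \eqref{205} of the two contact sets produces, over a set of $x'$ of measure $\ge \frac{\mu_0}{2}\mathcal H^{n-1}(B'_{l/2})$, two points of $\{u=0\}$ on the same vertical line separated by at least $\theta_0/4$, near each of which $u$ resembles the one-dimensional profile $g$ (Proposition \ref{p1}, part 2). Integrating $\frac12|\nabla u|^2+W(u)$ vertically then yields an energy excess $c_0$ over the one-dimensional energy on that set, hence $J(u,A_l)\ge \mathcal H^{n-1}(B'_{l/2})\bigl(\int_{-1}^1\sqrt{2W}\,ds+c_2\bigr)$; this contradicts the sharp energy upper bound for blow-downs of minimizers furnished by Modica's $\Gamma$-convergence, Theorem \ref{l1conv}. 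Without this energy/$\Gamma$-convergence step (or an extra monotonicity hypothesis), the argument cannot be completed. A secondary calibration issue: the relevant paraboloid openings in the proof are $a\sim\delta\theta l^{-2}$, i.e.\ tangent radii $R\sim l^{2}/\theta\to\infty$, not a fixed $R(\theta_0)$; a fixed radius would not make the $C/R$ errors compatible with the curvature scale required by \eqref{al} and the covering argument.
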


The fact that $u$ is a minimizer of $J$ is only used in a final step of the proof. This hypothesis can be replaced by $x_n$ monotonicity for $u$, or more generally by the monotonicity of $u$ in a given direction which is not perpendicular to $e_n$.  

\begin{defn}
For a small $a>0$, we denote by $ \mathcal D_a$ the set of points on $$\{u=0\} \cap \left(\{|x'| \le 3/4 l\} \times \{|x_n| \le \theta\}   \right)$$ which have a paraboloid of opening $-a$ and vertex $y=(y',y_n)$ with $|y'|  \le l$, 
$$x_n=-\frac a 2 |x'-y'|^2 + y_n$$
tangent by below.

We also denote by $D_a \subset \R^{n-1}$ the projection of $\mathcal D_a$ into $\R^{n-1}$ along the $e_n$ direction. 

\end{defn}

By Proposition \ref{p1} we see that as long as
\be\label{al}
 l^{-1} \ge a \ge C l^{-\frac 52} \quad \quad \mbox{and} \quad l \ge C,
 \ee 
 the level set $\{u=0\}$ has the following property $(P)$:
 
 \
 
$(P)$ In a neighborhood of any point of $\mathcal D_a$ the set $\{u=0\}$ is a graph in the $e_n$ direction of a $C^2$ function with second derivatives bounded by $\Lambda a$ with $\Lambda$ a universal constant.   
 
 \
 
Indeed, since $a \le l^{-1}$, at a point $z \in \mathcal D_a$ the corresponding paraboloid at $z$ has a tangent ball of radius $R=c a^{-1}$ by below. 
Moreover $u$ is defined in $B_{l/4}(z)$ and $l/4 \ge C R^{2/5} $. Since $u$ is a minimizer, in any sufficiently large ball we have points that satisfy either $u<-1+c$ or $u>1-c$ and Proposition \ref{p1} applies.

Since $\{u=0\}$ satisfies property $(P)$ then it satisfies a general version of Weak Harnack inequality which we prove in Section \ref{WH}. 
In particular Proposition \ref{mem} and Proposition \ref{p3} apply to our setting. Notice that in our case
$\{u=0\} \subset \{x_n \ge - \theta\}.$

This means that for any $\mu>0$ small, there exists $M(\mu)$ depending on $\mu$ and universal constants such that if
\be\label{201}
\{ u=0 \} \cap \left  (  B'_{l/2} \times [-\theta, (\delta-1) \theta]  \right  )  \ne \emptyset ,  \quad \quad \delta:= (32 M)^{-1},\ee
then by Proposition \ref{mem} we obtain 
\be\label{202}
 \mathcal H^{n-1}(D_a \cap B'_{l/2}) \ge (1-\mu) \mathcal H^{n-1}(B'_{l/2}),  \quad \quad \mbox{with} \quad a:= M \delta \theta l^{-2}.  \ee
Moreover, by Remark \ref{memr},
\be\label{203}
\mathcal D_a \cap \{|x'| \le l/2  \} \subset \{x_n \le (8 M \delta - 1)  \theta \} = \{ x_n \le -3\theta /4\} . \ee
We can apply Proposition \ref{mem} since the interval $I$ of allowed openings of the paraboloids satisfies (see \eqref{al}) 
$$I=[\delta \theta l^{-2}, M \delta \theta l^{-2}] \subset [C l^{-5/2}, l^{-1}],$$ 
provided that $l \ge C(\mu)$. 

Next we apply  Proposition \ref{p3} ``up-side down". Precisely, let's denote by $\mathcal D^*_a$ the set of points on 
\be\label{2035}
\mathcal D_a^*:= \{u=0\} \cap \left(\{|x'| \le l/2\}\times [-\frac 12 \theta, \theta] \right) \ee
 which admit a tangent paraboloid of opening $a$ by above. Also we denote by $D_a^*\subset \R^{n-1}$ the projection of $\mathcal D_a^*$ along $e_n$. Notice that in our setting $\{u=0\} \subset \{x_n \le \theta\}$ and $0 \in \{u=0\}$. Then according to Proposition \ref{p3} we have
\be\label{204}
 \mathcal H^{n-1}(D^*_{\tilde a} \cap B'_{l/2}) \ge \mu_0 \mathcal H^{n-1}(B'_{l/2}), \quad \quad \mbox{with} \quad \tilde a=8 \theta l^{-2},
\ee
for some $\mu_0$ universal.

We choose $\mu$ in \eqref{201}-\eqref{203} universal as $$\mu := \mu_0 /2.$$ According to \eqref{202}, \eqref{204} this gives
\be \label{205} 
\mathcal H^{n-1}(D_a \cap D^*_{\tilde a}) \ge  \frac{\mu_0}{2} \mathcal H^{n-1}(B'_{l/2}).\ee
Notice that by \eqref{203}, \eqref{2035} the sets $\mathcal D_a$ and $\mathcal D^*_{\tilde a}$ are disjoint.

 At this point we would reach a contradiction (to \eqref{201}) if $\{u=0\}$ were assumed to be a graph in the $e_n$ direction. Instead we use \eqref{205} and show that $u$ cannot be a minimizer. For this we need a gamma convergence result of blow-down minimizers to sets of minimal perimeter. Notice that if $u$ is a minimizer for the energy $J$ in $\eps^{-1}\Om$ then
the rescaled minimizer $\tilde u(x)=u(\eps x)$ is a minimizer of
$$J(v,\Omega) := \int_\Om \frac  \eps 2 |\nabla v|^2 + \frac{1}{\eps} W(v) \, dx.$$

We need the following $\Gamma$ convergence result due to Modica.

\begin{thm}[$\Gamma$ convergence]{\label{l1conv}}

Let $u_{\eps}$ be minimizers for the energies
$J_{\eps}( \cdot , B_1)$. There
exists a sequence $u_{\eps_k}$ such that
$$ u_{\eps_k} \to \chi_E-\chi_{E^c} \quad \mbox { in $L^1_{loc}(B_1)$}$$
where $E$ is a set with minimal perimeter in $B_1$. Moreover, if $A$ is an
open set, relatively compact in $B_1$, such that
$$\int_{\partial A}|D\chi_E|=0,$$
then
\begin{equation}{\label{conv2}}
\lim_{m\to \infty}J_{\eps_k}( u_{\eps_k} , A) = P_A (E)
\int_{-1}^1 \sqrt{2W(s)}ds.
\end{equation}
\end{thm}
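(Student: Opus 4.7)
I will follow the classical Modica--Mortola strategy in three parts: compactness for the minimizers $u_\eps$, a liminf bound, and a recovery sequence construction yielding both \eqref{conv2} and the minimality of the limit set $E$. Set $\sigma_0:=\int_{-1}^1 \sqrt{2W(s)}\,ds$ and
$$\Phi(s):=\int_{-1}^s \sqrt{2W(t)}\,dt,$$
so that $\Phi:[-1,1]\to [0,\sigma_0]$ is a bicontinuous monotone bijection.

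For compactness, the AM--GM inequality gives
$$|\nabla \Phi(u_\eps)| \;=\; \sqrt{2W(u_\eps)}\,|\nabla u_\eps| \;\le\; \frac{\eps}{2}|\nabla u_\eps|^2 + \frac{1}{\eps}W(u_\eps),$$
hence $\|\Phi(u_\eps)\|_{BV(B_1)}\le J_\eps(u_\eps,B_1)$. Using a fixed 1D transition as competitor yields $J_\eps(u_\eps,B_1)\le C$ uniformly in $\eps$, so by BV compactness a subsequence $\Phi(u_{\eps_k})$ converges in $L^1_{loc}(B_1)$ to some $v$. Since $\int W(u_\eps)\,dx=O(\eps)$, any pointwise subsequential limit of $u_\eps$ takes only the values $\pm 1$, forcing $v=\sigma_0\chi_E$; continuity of $\Phi^{-1}$ and dominated convergence then upgrade this to $u_{\eps_k}\to \chi_E-\chi_{E^c}$ in $L^1_{loc}(B_1)$, with $\chi_E\in BV_{loc}(B_1)$. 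For any $A\subset\subset B_1$ with $\int_{\partial A}|D\chi_E|=0$, BV lower semicontinuity now yields
$$\liminf_{k\to\infty} J_{\eps_k}(u_{\eps_k},A) \;\ge\; \liminf_{k\to\infty}\int_A |\nabla \Phi(u_{\eps_k})|\,dx \;\ge\; \sigma_0\, P_A(E).$$

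For the matching upper bound and the minimality of $E$, given a competitor $F$ with $F\triangle E \subset\subset A'\subset\subset A$ I first approximate $\partial F$ by a smooth hypersurface and set $w_\eps(x):=g(d_F(x)/\eps)$, where $d_F$ is the signed distance to $\partial F$ and $g$ is the 1D profile from Section 2. Using the first integral $(g')^2=2W(g)$ one has $\frac{\eps}{2}|\nabla w_\eps|^2+\frac{1}{\eps}W(w_\eps)=\eps^{-1}(g')^2(d_F/\eps)$, and the coarea formula gives $J_\eps(w_\eps,A')\to \sigma_0 P_{A'}(F)$. Gluing $w_\eps$ inside $A'$ to $u_{\eps_k}$ on a thin shell in $A\setminus A'$ produces a competitor $\tilde u_{\eps_k}$ admissible against $u_{\eps_k}$ on $A$; the minimality inequality $J_{\eps_k}(u_{\eps_k},A)\le J_{\eps_k}(\tilde u_{\eps_k},A)$, passed to the limit first in $k$ and then as the shell shrinks, gives $\sigma_0 P_A(E)\le \sigma_0 P_A(F)$, i.e.\ $E$ is a minimizer. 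Taking $F=E$ in the same construction produces $\limsup_k J_{\eps_k}(u_{\eps_k},A)\le \sigma_0 P_A(E)$, which combined with the liminf bound yields \eqref{conv2}.

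The main obstacle is the gluing step: $\tilde u_{\eps_k}$ must agree with $u_{\eps_k}$ near $\partial A$ so as to be admissible, yet carry an asymptotic energy close to $\sigma_0 P_A(F)$. The hypothesis $\int_{\partial A}|D\chi_E|=0$ is precisely what makes this possible, since the shell $A\setminus A'$ can then be chosen so thin that $P_A(E)-P_{A'}(E)$ is arbitrarily small; combined with the $L^1_{loc}$ convergence of $u_{\eps_k}$ and a cutoff interpolation, the transition cost is controlled by $J_{\eps_k}(u_{\eps_k},A\setminus A')$, which in the limit is bounded by $\sigma_0 P_{A\setminus A'}(E)$ and hence by an arbitrarily small quantity.
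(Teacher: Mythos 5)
You should first note that the paper does not prove Theorem \ref{l1conv} at all: it is quoted as a known result of Modica \cite{M} (see also Caffarelli--C\'ordoba \cite{CC} for the version adapted to local minimizers), so there is no internal proof to compare with. Your sketch reconstructs the classical route --- compactness via $|\nabla\Phi(u_\eps)|\le \frac{\eps}{2}|\nabla u_\eps|^2+\frac{1}{\eps}W(u_\eps)$ and BV compactness, lower semicontinuity for the liminf, and a recovery sequence $g(d_F/\eps)$ glued to $u_{\eps_k}$ together with minimality for the limsup and for the minimality of $E$ --- and this is indeed the correct strategy.

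Two steps, however, are not right as written. First, the bound $J_\eps(u_\eps,B_1)\le C$: a ``fixed 1D transition'' is not an admissible competitor, since $u_\eps$ minimizes only among perturbations keeping its own boundary values; the correct statement is an interior bound $J_\eps(u_\eps,B_\rho)\le C(\rho)$ for $\rho<1$, obtained from a competitor that agrees with $u_\eps$ near $\partial B_\rho$ and is cut to the constant $-1$ inside a slightly smaller ball (this suffices for $L^1_{loc}$ compactness and for $A\subset\subset B_1$, but the bound on all of $B_1$ need not hold). Second, and more seriously, the gluing estimate is circular: you control the transition cost by $\limsup_k J_{\eps_k}(u_{\eps_k},A\setminus A')\le \sigma_0 P_{A\setminus A'}(E)$, but that one-sided bound on the shell is precisely the upper-bound half of \eqref{conv2} being proven (lower semicontinuity gives only the reverse inequality). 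Moreover, for the interpolation $\tilde u=\phi w_\eps+(1-\phi)u_{\eps_k}$ the term $\eps^{-1}\int W(\tilde u)$ over the set where $u_{\eps_k}$ is far from $\pm1$ is of size $\eps^{-1}\max W\cdot|\{|u_{\eps_k}|\le 1-c\}\cap S|$, which $L^1$ convergence alone does not make small. The standard repair is a pigeonhole choice of the gluing shell: split $A\setminus A'$ into $N$ disjoint shells and, for each $k$, pick one on which the energy of $u_{\eps_k}$, the energy of $w_{\eps_k}$, and the measure of the bad set $\{|u_{\eps_k}|\le 1-c\}$ (of total measure $O(\eps_k)$ by the energy bound) are all at most $C/N$ times their totals; the gluing error is then $C/N+o_k(1)$, and one sends $N\to\infty$ after $k\to\infty$. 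Also, the hypothesis $\int_{\partial A}|D\chi_E|=0$ enters to prevent perimeter from concentrating on $\partial A$ in the upper bound, not merely to make $P_A(E)-P_{A'}(E)$ small (that is automatic because the perimeter is a measure). With these corrections your outline becomes the standard proof.
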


\

{\it Proof of Theorem \ref{TH}.}

We assume that $u$ is a local minimizer of $J$
and
$$u>0 \quad \mbox {if $x_n>\theta$}, \quad u<0
\quad \mbox {if $x_n<-\theta$}, \quad u(0)=0.$$

Assume by contradiction that \eqref{201} holds, and therefore \eqref{205} holds as well. For each $x' \in B_{l/2}"$ we integrate along $x_n$ direction and obtain

\be\label{207}\left( \frac{1}{2}|\nabla u|^2 + W(u) \right )dx_n \ge
\sqrt{2 W(u)}\, |u_n|\, \,  dx_n=\ee
$$=\sqrt{2 W(x_{n+1})}\, \, \, |dx_{n+1}|.$$

We can use barrier functions as in Proposition \ref{p1} (see \eqref{urho}) and bound $u$ by above an below in terms of the function $\rho_{l/2}$ and distance to the hyperplanes $x_n=\pm \theta$. This implies that the projection of the graph of $u$ in the cylinder $B'_{l/2} \times [-l/2,l/2]$ along the $e_n$ direction covers at least a strip
$$\{|x_{n+1}|<1-c(l), \quad |x'|<l/2 \}, \quad \quad \mbox{ and $c(l) \to 0$ as $l \to \infty$.}$$

This means that for each $x' \in B_{l/2}$ we have
$$\int_{-l/2}^{l/2} \, \,  \frac{1}{2}|\nabla u|^2 + W(u) \, dx_n \ge \int_{-1+c(l)}^{1-c(l)}\sqrt{2 W(x_{n+1})} \, \, dx_{n+1}.$$

We can improve this inequality when $x' \in D_a \cap D^*_{\tilde a}$. Indeed in this case there exist two points $z_1=(x',t_1) \in \mathcal D_a$ and $z_2=(x',t_2) \in \mathcal D^*_{\tilde a}$. At these points the set $\{u=0\}$ has a tangent ball of radius $c a^{-1}$ by below and respectively a ball of radius $c \tilde a^{-1}$ by above. Moreover, the normals to these balls at the contact points and the $e_n$ direction make a small angle which is bounded by $c  \, \, \theta l^{-1}$.  According to Proposition \ref{p1} part 2), we conclude
$$ \max_{s \in [-1/2,1/2]}|u(x',t_i+s)-g(s)|  \to 0 \quad \quad \mbox{as $\theta l^{-1} \to0$, for $i=1,2$.}  $$
Since by \eqref{203}, \eqref{2035}  we have $t_2 -t_1 \ge \theta /4 \ge \theta_0 /4$ we obtain from \eqref{207} that
$$\int_{-l/2}^{l/2} \, \,  \frac{1}{2}|\nabla u|^2 + W(u) \, dx_n \ge \int_{-1+c(l)}^{1-c(l)}\sqrt{2 W(x_{n+1})} \, \, dx_{n+1} + c_0,    \quad \quad \mbox{if $x' \in D_a \cap D^*_{\tilde a}$},$$
for some $c_0$ universal. Now we can use \eqref{205} and integrate the inequalities above in $x' \in B'_{l/2}$.

Denote by
$$A_l:=\{|x'| <l/2 \} \times \{|x_n|<l/2 \},$$
and we can find two small universal
constants $c_1$, $c_2>0$ such that
\begin{align}{\label{jua}}
 J(u,A_l)&= \int_{A_l} \frac{1}{2}|\nabla u|^2 +W(u) \, dx \\
\nonumber &\ge  \mathcal H^{n-1}(B'_{l/2}) \left( \int_{c_1-1}^{1-c_1}\sqrt{2h_0(x_{n+1})}dx_{n+1}
+c_0 \mu_0 /2 \right) \\
\nonumber &\ge  \mathcal H^{n-1}(B'_{l/2})  \left(\int_{-1}^1\sqrt{2h_0(s)}ds + c_2 \right).\end{align}

Assume by contradiction that there exist numbers $l_k$, $\theta_k$ with
$$\theta_k l_k^{-1} \to 0, \quad \theta_k \ge \theta_0,$$ and local
minimizers $u_k$ in $A_{2l_k}$ satisfying the
hypothesis of Theorem \ref{TH}, and therefore also \eqref{201}.

Denote by $\varepsilon_k:= l_k^{-1}$ and
$v_k(x):=u_k(\varepsilon_k^{-1}x)$. From (\ref{jua}) we obtain
\begin{align*}{\label{jua1}}
J_{\varepsilon_k}(v_k,A_1)&=\varepsilon_k^{n-1}
J(u_k, A_{l_k}) \\
& \ge \mathcal H^{n-1}(B'_{1/2})  \int_{-1}^1\sqrt{2h_0(s)}ds + c_2.\end{align*}

On the other hand, as $k \to \infty$ we have
$$v_k \to \chi_E-\chi_{E^c} \quad \mbox{ in $L^1_{loc}(A_2)$},$$
where $E=A_2 \cap \{ x_n >0 \}$. By Theorem \ref{l1conv} one has
$$\lim_{k \to \infty} J_{\varepsilon_k}(v_k,A_1)= P_{A_1}E
\int_{-1}^1\sqrt{2h_0(s)}ds = \mathcal H^{n-1}(B'_{1/2})   \int_{-1}^1\sqrt{2h_0(s)}ds$$
and we reach a contradiction.

With this Theorem \ref{TH} is proved.

\qed

\section{The flatness theorem for minimizers}\label{imp}

In this section we prove the flatness theorem for phase transitions, i.e minimizers of the energy
 $$J(u, \Omega)=\int_\Omega \frac 12 |\nabla u|^2 + W(u) \, dx. $$
The corresponding flatness theorem is the following.

\begin{thm}[Improvement of flatness]{\label{c1alpha}}

Let $u$ be a minimizer of $J$ in the cylinder $$\{ |x'|<l \} \times \{
|x_n|<l \},$$ and assume that
$$0 \in \{ u=0 \} \cap \{|x'|< l \} \subset \{|x_n|< \theta\}.$$

Given $\theta_0>0$ there exists $\varepsilon_1 ( \theta_0)>0$ depending on
$n$, $W$ and $\theta_0$ such that if
$$ \frac{\theta}{l} \le \varepsilon_1 (\theta_0), \quad \theta_0 \le
\theta$$
then
$$\{u=0 \} \cap   \{ |\pi_{\xi}x| < \eta_2l \} \subset \{ |x \cdot \xi| <\eta_1
\theta \}, $$
for some unit vector $\xi$, where $0 < \eta_1 < \eta_2 <1$ are constants depending only
on $n$. ($\pi_{\xi}$ denotes the projection along $\xi$)
\end{thm}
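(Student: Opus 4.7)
The plan is a compactness and contradiction argument in the spirit of De Giorgi: iterate the Harnack inequality of Theorem \ref{TH} to obtain a H\"older modulus of continuity for rescaled level sets, extract a blow-up limit, identify the limit as a harmonic function via Proposition \ref{p2}, and convert harmonic regularity into the tilted slab of the conclusion. This parallels the proof of the flatness theorem for classical minimal surfaces in \cite{S2}.

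Suppose the theorem fails for some $\theta_0>0$. Then there exist minimizers $u_k$ in cylinders of radius $l_k$ and heights $\theta_k\ge \theta_0$ with $\theta_k/l_k\to 0$ (whence $l_k\to\infty$) whose zero sets contain the origin, lie in $\{|x_n|<\theta_k\}$, yet admit no direction $\xi$ satisfying the thesis. Apply the affine rescaling $T_k\colon(x',x_n)\mapsto(x'/l_k, x_n/\theta_k)$ and set $\Gamma_k:=T_k(\{u_k=0\})$, which lies in $B'_1\times[-1,1]$ and contains $0$. Iterating Theorem \ref{TH} at all dyadic scales between $\theta_0/\theta_k$ (in rescaled variables) and $1/4$ produces a universal exponent $\alpha\in(0,1)$ such that $\Gamma_k\cap(B'_{3/4}\times\R)$ is contained in the graph of a $C^\alpha$ function $w_k\colon B'_{3/4}\to[-1,1]$ with uniformly bounded $C^\alpha$ seminorm, down to arbitrarily small scales as $k\to\infty$. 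By Arzel\`a--Ascoli a subsequence converges uniformly to some $w_\infty\in C^\alpha(B'_{3/4})$ with $w_\infty(0)=0$ and $|w_\infty|\le 1$.

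The core step is to show $w_\infty$ is harmonic. Let a smooth $\varphi$ touch $w_\infty$ from below at $x'_0$; by the $C^\alpha$ control of $w_k$ one obtains an approximating quadratic $P(y)=w_k(x'_k)+b^*\cdot(y-x'_k)+\frac12\sum\alpha^*_i(y_i-(x'_k)_i)^2$ touching $w_k$ from below at some $x'_k\to x'_0$, with $\alpha^*_i$ arbitrarily close to $\partial_i^2\varphi(x'_0)$. Pulling back by $T_k^{-1}$ converts this into a quadratic with slope $(\theta_k/l_k)b^*$ and curvatures $(\theta_k/l_k^2)\alpha^*_i$, tangent to $\{u_k=0\}$ from below. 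Apply Proposition \ref{p2} with the critical choice $\eps_k:=R_k^{-1/5}$ and $R_k:=c\,(l_k^2/\theta_k)^{5/3}$ for $c$ small universal; the inequalities $|b^*|\theta_k/l_k\le\eps_k$ and $|\alpha^*_i|\theta_k/l_k^2\le\eps_k^{-2}R_k^{-1}$ both follow from $\theta_k/l_k\to 0$, while hypothesis b) of Proposition \ref{p1} follows from minimality together with Theorem \ref{l1conv}. The conclusion $\sum(\theta_k/l_k^2)\alpha^*_i\le CR_k^{-1}$ simplifies to $\sum\alpha^*_i\le C(\theta_k/l_k^2)^{2/3}\to 0$, so $\Delta\varphi(x'_0)\le 0$: $w_\infty$ is a viscosity supersolution of Laplace's equation. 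Running the symmetric argument on $-u_k$ (which exchanges one-sided tangent balls) yields the subsolution property, so $w_\infty$ is harmonic.

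Since $w_\infty$ is harmonic with $w_\infty(0)=0$ and $|w_\infty|\le 1$, interior estimates give $b'\in\R^{n-1}$ with $|b'|\le C_n$ and $|w_\infty(x')-b'\cdot x'|\le C_n|x'|^2$ on $B'_{1/2}$. Fix $\eta_1\in(0,1)$ and pick $\eta_2\in(0,\eta_1)$ so small that $C_n\eta_2\le\eta_1/4$; then $|w_k(x')-b'\cdot x'|\le \eta_1\eta_2/2$ on $B'_{\eta_2}$ for all $k$ large. Rescaling back and rotating so that the tilted hyperplane $x_n=(\theta_k/l_k)b'\cdot x'$ is orthogonal to a unit vector $\xi_k$ (for which $\pi_{\xi_k}$ agrees with the projection along $e_n$ up to $O(\theta_k/l_k)$) yields
\[
\{u_k=0\}\cap\{|\pi_{\xi_k}x|<\eta_2 l_k\}\subset\{|x\cdot\xi_k|<\eta_1\theta_k\},
\]
contradicting the assumed failure. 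The main obstacle is the third step: Proposition \ref{p2} must be applied at a scale $R_k$ saturating every one of its hypotheses simultaneously, and only the specific calibration $R_k\sim(l_k^2/\theta_k)^{5/3}$ is strong enough to yield $\sum\alpha^*_i\le o(1)$ rather than merely a universal $O(1)$ bound.
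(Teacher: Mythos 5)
Your overall architecture is the same as the paper's: compactness, iterated Harnack inequality (Theorem \ref{TH}) giving H\"older control of the rescaled level sets, viscosity harmonicity of the limit via Proposition \ref{p2}, and then harmonic estimates to produce the tilted slab. However, the step you yourself flag as the crux is miscalibrated, and as written it fails. With $R_k=c\,(l_k^2/\theta_k)^{5/3}$ and $\eps_k=R_k^{-1/5}$: (i) Proposition \ref{p2} requires $u_k$ to solve the equation in $B_{R_k^{2/5}}$ around the contact point, with the quadratic staying below $\{u_k=0\}$ there; but $R_k^{2/5}\sim (l_k^2/\theta_k)^{2/3}$, which for $\theta_k$ comparable to $\theta_0$ is of order $l_k^{4/3}\gg l_k$, larger than the cylinder on which $u_k$ is even defined, let alone the region $|x'|\lesssim \delta l_k$ where the touching is one-sided (and hypothesis b) of Proposition \ref{p1} cannot be located either). (ii) The slope hypothesis $|b'|\le\eps_k$ reads $(\theta_k/l_k)|b^*|\le C(\theta_k/l_k^2)^{1/3}$, i.e.\ $\theta_k^2\lesssim l_k$, which does \emph{not} follow from $\theta_k/l_k\to0$ (take $\theta_k=l_k^{3/4}$, allowed since only $\theta_k\ge\theta_0$ is assumed); so your claim that both hypotheses ``follow from $\theta_k/l_k\to0$'' is false, and in fact (i) wants $\theta_k^2\gtrsim l_k$ while (ii) wants $\theta_k^2\lesssim l_k$, so no regime saves this choice. (iii) The asserted need for an $o(1)$ bound on $\sum\alpha_i^*$ is a red herring: since the test polynomial is chosen with $\triangle P>\delta$ (and $\|M\|,|\xi|\le\delta^{-1}$), a bound $\mathrm{tr}\,M\le\delta$ already contradicts it. The paper's calibration does exactly this: $\eps=\delta^2$ fixed and $R^{-1}=C^{-1}\delta\,\theta_k l_k^{-2}$, so that $R^{2/5}\le c(\delta,\theta_0)^{-1}\,\delta l_k$ (here $\theta_k\ge\theta_0$ is used), all hypotheses of Proposition \ref{p2} hold for large $k$, and the conclusion $\sum a_i\le CR^{-1}$ rescales precisely to $\mathrm{tr}\,M\le\delta$. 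Your harmonicity step therefore needs to be redone with $R\sim\delta^{-1}l_k^2/\theta_k$ and fixed $\eps$, not with the saturating choice.

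Two secondary points. First, at the end you pick $\eta_2<\eta_1$, whereas the statement (and its iteration in Theorem \ref{planelike}) requires $\eta_1<\eta_2$ so that the flatness ratio actually improves; since the harmonic approximation error is quadratic, the correct choice is $\eta_2\sim\eta_1^{1/2}$ with $\eta_1$ small, which gives $\eta_1<\eta_2$. Second, the rescaled level sets $A_k$ are not known to be graphs; the iterated Harnack argument only traps them between two H\"older graphs $a_k\le b_k$ with $b_k-a_k\to0$ (as in the paper's Claim 1), which is what one passes to the limit -- a phrasing issue, not a substantive one.
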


As a consequence of this flatness theorem we obtain our main theorem.

\begin{thm}{\label{planelike}}
Let $u$ be a global minimizer of $J$ in $\mathbb{R}^n$. Suppose that the $0$ level set $\{u=0\}$ is asymptotically flat at $\infty$, i.e there exist sequences of positive numbers
$\theta_k$, $l_k$ and unit vectors $\xi_k$ with $l_k \to \infty$,
$\theta_k l_k^{-1}\to 0$ such that
$$\{u=0\} \cap B_{l_k} \subset \{ |x \cdot \xi_k| < \theta_k \}.$$
Then the $0$ level set is a hyperplane and $u$ is one-dimensional.
\end{thm}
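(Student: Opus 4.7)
If $\{u=0\}=\emptyset$ then $u\equiv\pm 1$ is already one-dimensional, so we assume otherwise and, translating if necessary, suppose $0\in\{u=0\}$; pass to a subsequence with $\xi_k\to\xi_\infty$. Fix $\theta_0>0$ small. The plan is to iterate Theorem \ref{c1alpha} starting from the initial flatness $\theta_k$ at scale $l_k$ in direction $\xi_k$. For $k$ large we have $\theta_k/l_k\le\varepsilon_1(\theta_0)$, so Theorem \ref{c1alpha} applies; since $\eta_1<\eta_2<1$ the ratio $\theta/l$ strictly decreases at every step, so the smallness hypothesis is preserved, and we may iterate as long as the current flatness stays $\ge\theta_0$. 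After $j$ steps we obtain unit vectors $\xi_k^{(j)}$ with
\[
\{u=0\}\cap\bigl\{|\pi_{\xi_k^{(j)}}x|<\eta_2^j l_k\bigr\}\subset \bigl\{|x\cdot\xi_k^{(j)}|<\eta_1^j\theta_k\bigr\}.
\]
Comparing consecutive nested slabs (both passing through $0$) forces $|\xi_k^{(j+1)}-\xi_k^{(j)}|\le C(\eta_1/\eta_2)^j(\theta_k/l_k)$; these one-step drifts form a convergent geometric series, so $|\xi_k^{(j)}-\xi_k|\le C(\theta_k/l_k)\to 0$ uniformly in $j$, and $\xi_k^{(j(k))}\to\xi_\infty$ for every choice $j=j(k)$.

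\textbf{Step 2: Flatness on bounded sets.} Let $J_k$ be the largest $j$ with $\eta_1^j\theta_k\ge\theta_0$; the iteration terminates at a scale $l_k^{(J_k)}=\eta_2^{J_k}l_k\sim l_k(\theta_0/\theta_k)^\beta$ with $\beta:=\log(1/\eta_2)/\log(1/\eta_1)\in(0,1)$. Because $\beta<1$, $l_k\to\infty$, and $\theta_k/l_k\to 0$, one has $\theta_k/l_k^{1/\beta}=(\theta_k/l_k)\,l_k^{1-1/\beta}\to 0$, i.e.\ $l_k^{(J_k)}\to\infty$. Thus for any fixed $R>0$ and $k$ large, $B_R$ is contained in the final cylinder, giving
\[
\{u=0\}\cap B_R\subset\bigl\{|x\cdot\xi_k^{(J_k)}|<\theta_0\bigr\}.
\]
Sending $k\to\infty$ (using $\xi_k^{(J_k)}\to\xi_\infty$, a limit which by Step 1 does not depend on the choice of $\theta_0$), then $\theta_0\to 0$, then $R\to\infty$, we conclude $\{u=0\}\subset H:=\{x\cdot\xi_\infty=0\}$.

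\textbf{Step 3: From flat level set to $u=g(x\cdot\xi_\infty)$.} Fix $y\in\{u=0\}\subset H$. For every $R>0$ the balls $B_R(y\pm R\xi_\infty)$ lie strictly on opposite sides of $H$, hence entirely in $\{u>0\}$ or $\{u<0\}$. By the Caffarelli--Cordoba density estimates for global minimizers, $|u|\ge 1-c$ at distance $\gtrsim R$ from $\{u=0\}$; in particular for $R$ large there is $x_0\in B_{R/4}(y-R\xi_\infty)$ with $u(x_0)\le -1+c$. This verifies both hypotheses of Lemma \ref{l3} at $y$ with an arbitrarily large tangent radius $R$, yielding $|u(x)-g((x-y)\cdot\xi_\infty)|\le C/R$ in $B_1(y)$. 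Letting $R\to\infty$ gives $u(x)=g((x-y)\cdot\xi_\infty)$ there; since $y\cdot\xi_\infty=0$ these local formulas agree on overlaps and patch together to $u\equiv g(x\cdot\xi_\infty)$ on $\mathbb{R}^n$, so $u$ is one-dimensional.

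\textbf{Main obstacle.} The technical heart is Step 1: each application of Theorem \ref{c1alpha} rotates the coordinate frame, so one must verify that the flatness hypothesis persists in the new frame at the next scale, and must quantify the cumulative directional drift so that it sums (by the geometric ratio $\eta_1/\eta_2<1$) to a small quantity, producing a single well-defined limit direction $\xi_\infty$ independent of the iteration depth. A secondary bookkeeping point is the translation in Step 2 of $\beta<1$, $l_k\to\infty$, $\theta_k/l_k\to 0$ into $l_k^{(J_k)}\to\infty$, which is what allows the iteration to reach any prescribed bounded scale $R$.
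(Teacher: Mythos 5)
Your Steps 1--2 follow essentially the paper's route (iterate Theorem \ref{c1alpha} until the slab width drops to $\approx\theta_0$, and check that the final cylinder radius tends to infinity), but with one unjustified assertion: the drift bound $|\xi_k^{(j+1)}-\xi_k^{(j)}|\le C(\eta_1/\eta_2)^j\theta_k l_k^{-1}$ does not follow merely from the fact that two slabs through the origin both contain $\{u=0\}$ inside the smaller cylinder. If the level set happened to cluster near the axis, slabs with very different normals would both contain it. To force the angle bound you need that $\{u=0\}$ spans the cylinder horizontally, which in turn requires knowing that $u$ has opposite signs on the two sides of the slab (this follows from the density estimates for minimizers at the point $u(0)=0$: if $u$ had the same sign on both sides, one of the phases $\{u<-1/2\}$, $\{u>1/2\}$ would be empty near $0$, contradicting the two-sided density estimate). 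The paper avoids this bookkeeping entirely: it only extracts convergent subsequences of slab directions, first as $\varepsilon\to 0$ for fixed $\theta_0$ and then as $\theta_0\to 0$, which already yields $\{u=0\}\subset H$ for some hyperplane $H$; your quantitative drift series is not needed for that conclusion.

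The genuine gap is in Step 3, where you also diverge from the paper (which concludes by noting that, similarly, all level sets are hyperplanes, hence $u$ is one-dimensional). Two points. First, you need $u\le -1+c$ below $H$, i.e. that the signs on the two half-spaces are opposite and you are on the negative side; constancy of sign on each half-space alone does not give this, and again the density-estimate argument at $u(0)=0$ is what supplies it. Second, and more seriously, the conclusion of Lemma \ref{l3} is an estimate only in the unit ball $B_1$ around the contact point, so letting $R\to\infty$ gives $u=g((x-y)\cdot\xi_\infty)$ only on $\bigcup_{y\in\{u=0\}}B_1(y)$, which is (at most) a slab of width $2$ around $H$, not $\mathbb{R}^n$. ``Patching'' these local identities cannot produce the formula outside that slab, so the final sentence of Step 3 does not follow as written. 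You need an extra ingredient to propagate the identity from the slab to all of $\mathbb{R}^n$: for instance, unique continuation applied to $w:=u-g(x\cdot\xi_\infty)$, which solves $\triangle w=c(x)w$ with $c$ bounded and vanishes on an open set; or a comparison/sliding argument trapping $u$ between $g(x\cdot\xi_\infty-t)$ and $g(x\cdot\xi_\infty+t)$ and pinching $t\to 0$; or the paper's own device of running the flatness argument on every level set $\{u=s\}$.
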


 By saying that $u$ is one-dimensional we understand that $u$ depends only on one direction $\xi$, i.e  $u=g(x \cdot \xi)$ for some function $g$. In our case, $g$ is a minimizer in 1D and can be computed explicitly from $W$. The function $g$ is unique up to translations.

\begin{proof} Without loss of generality assume $u(0)=0$. Fix $\theta_0>0$, and choose $k$ large such that $\theta_k
l_k^{-1} \le \varepsilon \le \varepsilon_1(\theta_0)$.

If $\theta_k \ge \theta_0$ then we apply Theorem \ref{c1alpha} and obtain
that $\{ u=0 \}$ is trapped in a flatter cylinder. We apply Theorem
\ref{c1alpha} repeatedly till the
height of the cylinder becomes less than $\theta_0$.

In some system of coordinates we obtain
$$\{ u=0 \} \cap \left ( \{|y'| <l_k'\} \times \{|y_n|<l_k' \} \right
) \subset \{ |y_n| \le \theta_k'\},$$
with $\theta_0 \ge \theta_k' \ge \eta_1 \theta_0$ and $\theta_k'l_k'^{-1}
\le \theta_k l_k^{-1} \le \varepsilon$, hence $$l_k' \ge
\frac {\eta_1}{\eps} \theta_0.$$

We let $\varepsilon \to 0$ and obtain that $\{ u=0 \}$ is included in an
infinite strip of width $\theta_0$. We then let $\theta \to 0$ and obtain that $\{u=0\}$ is a hyperplane. Similarly, all the level sets are hyperplanes which implies that $u$ is one-dimensional.

\end{proof}

This corollary gives one of our main results: the uniqueness up to rotations of global minimizers in low dimensions.

\

{\it Proof of Theorem \ref{c1alpha}}

The proof is by compactness and it follows from Theorem \ref{TH} and Proposition \ref{p2}.
Assume by contradiction that there exist $u_k$, $\theta_k$, $l_k$,
$\xi_k$ such that $u_k$ is a minimizer of $J$,
$u_k(0)=0$, the level
set $\{ u_k=0 \}$ stays in the flat cylinder
$$ \{ |x'| < l_k \} \times \{ |x_n| < \theta_k \}$$
with $\theta_k \ge \theta_0$, $\theta_k l_k^{-1} \to 0$ as $k \to \infty$ for
which the conclusion of Theorem \ref{c1alpha} doesn't hold.\\

Let $A_k$ be the rescaling of the $0$ level sets given by
$$ (x',x_n) \in \{ u_k =0 \} \mapsto (y',y_n) \in A_k$$
$$ y'=x'l_k^{-1}, \quad y_n=x_n \theta_k^{-1}.$$

{\it Claim 1:} $A_k$ has a subsequence that converges uniformly on $|y'|
\le
1/2$ to a set $A_{\infty}=\{(y',w(y')), \quad |y'| \le 1/2 \}$ where $w$
is a Holder continuous function. In other words, given $\varepsilon$, all
but a finite number of the $A_k$'s  from the subsequence are in an
$\varepsilon$ neighborhood of $A_{\infty}$.\\

{\it Proof:} Fix $y_0'$, $|y_0'| \le 1/2$ and suppose $(y_0', y_k) \in
A_k$. We apply Theorem \ref{TH} for the function $u_k$ in the
cylinder
$$ \{ |x'-l_ky_0'| < l_k/2 \} \times \{ |x_n- \theta_k y_k | <
2 \theta_k \}$$
in which the set $\{ u_k =0 \}$ is trapped. Thus, there exist a universal
constant $\eta_0>0$ and an increasing function $\varepsilon_0(\theta)>0$,
$\varepsilon_0(\theta) \to 0$ as $\theta \to 0$, such that $\{ u_k =0 \}$
is
trapped in the cylinder
$$ \{ |x'-l_ky_0| <  l_k/4 \} \times \{ |x_n- \theta_k y_k| < 2
(1-\delta) \theta_k \}$$
provided that $4\theta_k l_k^{-1} \le \varepsilon_0(2\theta_k)$. Rescaling back we find that
$$A_k \cap \{|y'-y_0'|\le 1 /4 \} \subset \{ |y_n-y_k| \le 2
(1-\delta) \}.$$
We apply the Harnack inequality repeatedly and we find that
\begin{equation}{\label{c30}}
A_k \cap \{|y'-y_0'|\le 2^{-m -1} \} \subset \{ |y_n-y_k| \le 2
(1-\delta)^m \}
\end{equation}
provided that
$$4 \theta_k l_k^{-1} \le \delta^{m-1} \varepsilon_0 \left (2(1-\delta)^m
\theta_k \right).$$
Since these inequalities are satisfied for all $k$ large we conclude that
(\ref{c30}) holds for all but a finite number of $k$'s.

There exist positive constants $\alpha$, $\beta$ depending only on
$\eta_0$, such that if (\ref{c30}) holds for all $m \le m_0$ then $A_k$ is
above the graph
$$ y_n=y_k-2(1-\eta_0)^{m_0-1}- \alpha |y'-y_0'|^\beta$$
in the cylinder $|y'| \le 1/2$.

Taking the supremum over these functions as $y_0'$ varies we obtain that
$A_k$ is above the graph of a H\"older function $y_n=a_k(y')$. Similarly we
obtain that $A_k$ is below the graph of a H\"older function
$y_n=b_k(y')$. Notice that
\begin{equation}{\label{c30b}}
b_k-a_k \le 4(1-\eta_0)^{m_0-1}
\end{equation}
and that $a_k$, $b_k$ have a modulus of continuity bounded by the H\"older
function $\alpha t^\beta$.
 From Arzela-Ascoli Theorem we find that there exists a subsequence
$a_{k_p}$ which converges uniformly to a function $w$. Using
(\ref{c30b}) we obtain that $b_{k_p}$, and therefore $A_{k_p}$, converge
uniformly to $w$.\\

{\it Claim 2:} The function $w$ is harmonic (in the viscosity sense).\\

{\it Proof:} The proof is by contradiction. Fix a quadratic polynomial
$$y_n=P(y')=\frac{1}{2}{y'}^TMy' + \xi \cdot y', \quad \quad \|M \|
< \delta^{-1}, \quad |\xi| <\delta^{-1}$$
such that $ \triangle P >\delta $, $P(y')+ \delta  |y'|^2$ touches
the graph of $w$, say, at $0$ for simplicity, and stays below $w$ in
$|y'|<2\delta$, for some small $\delta$. Thus,
for all $k$ large we find points $({y_k}',{y_k}_n)$ close to $0$ such that
$P(y')+ const$ touches $A_k$ by below at $({y_k}',{y_k}_n)$ and stays
below it in $|y'-{y_k}'|< \delta$.\\
This implies that, after eventually a translation, there exists a surface
$$\left \{ x_n =
\frac{\theta_k}{l_k^2} \frac{1}{2} {x'}^T M x'+ \frac{\theta_k}{l_k} \xi_k
\cdot x'
\right \}, \quad |\xi_k|<2 \delta^{-1}$$
that touches $\{ u_k = 0 \}$ at the origin and stays below it in the
cylinder $|x'|<\delta l_k$.

We contradict Proposition \ref{p2} by choosing $R$ as
$$ R^{-1}:=C^{-1} \, \delta \, \theta_k l_k^{-2} ,$$ with $C$ the constant from Proposition \ref{p2} and with $\eps = \delta^2 $. Then for all large $k$ we have
$$ \theta_k l_k^{-1}|\xi_k| \le \eps, \quad \quad \theta_k l_k^{-2} \|M\| \le \eps^{-2} R^{-1}, \quad \quad  \delta l_k \ge c(\delta,\theta_0) R^{1/2} \ge R^{2/5},$$
and Proposition \ref{p2} applies. We obtain $tr \, M \le \delta$ and we reached a contradiction.

\

Since $w$ is harmonic, there exist $0 < \eta_1 < \eta_2$ small
(depending only on $n$) such that
$$|w-\xi \cdot y'| < \eta_1 /2 \quad \quad \mbox{ for $|y'|<2 \eta_2$ }.$$
Rescaling back and using the fact that $A_k$ converge uniformly to the
graph of $w$ we conclude that for $k$ large enough
$$\{ u_k=0 \} \cap \{ |x'| < 3 l_k \eta_2 /2  \} \subset \{
| x_n -  \theta_k l_k^{-1} \xi \cdot x'| < 3 \theta_k \eta_1 /4
\}.$$
This is a contradiction with the fact that $u_k$ doesn't satisfy the
conclusion of the Theorem \ref{c1alpha}.

\qed

\section{Radial barriers}\label{radial}

In this section we construct radial approximations of the one-dimensional solution $g$.

\begin{lem}\label{o1}

There exists an approximation $g_R: \mathbb R \to [-1,1]$ of the 1D solution $g$ such that 

1) $g_R$ is $C^2$ and strictly increasing in an interval $[t_R^-,t_R^+]$ with $|t_R^\pm| \le C \log R$ and 
$$ \mbox{$g_R$ is constant in $(- \infty, t_R^-]$  and $g_R=1$ in $[t_R^+, \infty)$,}$$

2) $g_R(0)=0$, $g_R'(t_R^-)=0$,  and
$$\|g-g_R\|_{C^{0,1}} \le \frac C R \quad \mbox{in $[-4,4]$},$$

3)  $$g_R'' + \frac{2(n-1)}{R} g_R' \le W'(g_R) + \frac{C}{R} \chi_{[-1,1]} ,$$
and the inequality is understood in the viscosity sense at the two points $t_R^\pm$ where $g''$ does not exist. 
\end{lem}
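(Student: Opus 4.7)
The plan is to construct $g_R$ as a perturbation of the standard 1D profile $g$. First I record the exponential trapping of $g$ near $\pm 1$: from $\tfrac{1}{2}(g')^2 = W(g)$ and $W(s) \asymp \tfrac{1}{2}W''(\pm 1)(1\mp s)^2$ near $\pm 1$, one obtains $|1-g(t)| + g'(t) \le Ce^{-\sqrt{W''(1)}\,t}$ for $t \ge 0$, and similarly as $t \to -\infty$. Choose $T_R = O(\log R)$ large enough that $|g(\pm T_R)\mp 1|$ and $g'(\pm T_R)$ are all of order $1/R$.

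I look for $g_R = g + \eta$ on $[-T_R, T_R]$, where $\eta \in C^2$ is a correction of size $O(1/R)$ chosen so that $g_R(T_R) = 1$, $g_R'(\pm T_R) = 0$, and the differential inequality (3) holds; the value $c_- := g_R(-T_R)$ will automatically lie in $[-1, s_\ast]$ (where $s_\ast$ denotes the first critical point of $W$ past $-1$) for $R$ large, since $g(-T_R) = -1 + O(1/R)$ and $\eta = O(1/R)$. Outside $[-T_R, T_R]$ extend $g_R$ by the constants $c_-$ and $1$. Substituting $g_R = g + \eta$ in (3) and using $g'' = W'(g)$, the inequality linearizes (modulo $O(\eta^2) = O(1/R^2)$) to
\[
\eta'' - W''(g)\,\eta \le -\tfrac{2(n-1)}{R}(g'+\eta') + \tfrac{C}{R}\chi_{[-1,1]}.
\]
Differentiating $g'' = W'(g)$ yields $(g')'' = W''(g)\,g'$, so $g'$ is a solution of the homogeneous linear ODE $y'' - W''(g)y = 0$; a second independent solution is $y_2(t) := g'(t)\int_0^t ds/g'(s)^2$, with unit Wronskian. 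A particular solution of the inhomogeneous equation with forcing $-\tfrac{2(n-1)}{R}g'$ is then furnished by variation of parameters and is of size $O(1/R)$.

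The 2-parameter family $\eta = c_1 g' + c_2 y_2 + \eta_p$ together with the free choice of $T_R$ provides three degrees of freedom, which I fix by the three boundary equations $\eta(T_R) = 1-g(T_R)$, $\eta'(T_R) = -g'(T_R)$, $\eta'(-T_R) = -g'(-T_R)$; a shooting argument produces such a solution with $T_R = O(\log R)$ and $\|\eta\|_{C^2} \le C/R$. Then property (1) and the $C^2$ regularity on $[t_R^-, t_R^+]$ hold by construction; (2) follows from $\|\eta\|_{C^{0,1}([-4, 4])} \le C/R$; the viscosity inequality in (3) at $t_R^\pm$ is immediate since from the constant side $g_R'' = g_R' = 0$ while $W'(c_\pm) \ge 0$. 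The main obstacle is verifying the pointwise linear inequality outside the slack zone $[-1, 1]$, where no $C/R$ cushion is available; this is controlled because the forcing $\tfrac{2(n-1)}{R}g'$ is exponentially small for $|t| > 1$, leaving ample room to absorb both the term $\tfrac{2(n-1)}{R}\eta'$ and the nonlinear remainder $O(\eta^2) = O(1/R^2)$.
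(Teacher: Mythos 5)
Your overall strategy (perturb $g$ by an $O(1/R)$ correction $\eta$, solve the linearized ODE by variation of parameters, shoot to match the endpoint conditions) is genuinely different from the paper's, but as written it has a real gap at the crucial point: inequality 3) \emph{outside} the interval $[-1,1]$, where there is no $\tfrac{C}{R}$ cushion. If $\eta$ solves the linearized equation $\eta''-W''(g)\eta=-\tfrac{2(n-1)}{R}g'$ with equality, the residual in 3) is exactly $\tfrac{2(n-1)}{R}\eta'$ plus the linearization error $\bigl[W''(\theta)-W''(g)\bigr]\eta$; both terms are of indefinite sign, and since $W$ is only assumed $C^2$ the second is merely $o(1/R)$, not $O(1/R^2)$. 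Your argument that ``the forcing is exponentially small for $|t|>1$, leaving ample room'' is a non sequitur: smallness of the forcing does not produce a sign, and in the region where the inequality must hold with zero slack these signless residuals can dominate any favorable term (near $t_R^+$ the only helpful contribution is $\tfrac{2(n-1)}{R}g'\sim R^{-2}$, smaller than the error even for smooth $W$). What is needed is a built-in negative margin of order $1/R$ in the construction itself. This is precisely what the paper arranges, and by a different device: it passes to the phase plane, writing $\tfrac12(g_R')^2=h_R(s)$ as a function of the value $s$, so that 3) becomes the first-order inequality $h_R'+\tfrac{2^{3/2}(n-1)}{R}\sqrt{h_R}\le W'(s)+\tfrac CR\chi_{[-c,c]}$, and then defines $h_R$ explicitly as $W$ plus $O(1/R)$ corrections whose derivative carries an explicit $-C_2/R$ margin outside $[-c,c]$ and which vanish at $s=s_R-1>-1$ and at $s=1$; this one formula simultaneously yields the margin, strict monotonicity ($g_R'=\sqrt{2h_R}>0$), the range in $[-1,1]$, the endpoints $t_R^\pm$ at distance $O(\log R)$ with zero slope, and $|g_R-g|\le C/R$.

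Besides the main gap, several conclusions of the lemma are not actually verified in your scheme and are not automatic: strict monotonicity of $g_R=g+\eta$ on $[-T_R,T_R]$ (near $T_R$ both $g'$ and $\eta'$ are of size $1/R$ with $\eta'(T_R)=-g'(T_R)$, so $g_R'\ge0$ needs proof); that the range stays in $[-1,1]$, in particular $c_-=g_R(-T_R)\ge-1$, which you need both for $W'(g_R)$ to be defined ($W\in C^2([-1,1])$ only) and for your viscosity check $W'(c_-)\ge0$; the normalization $g_R(0)=0$ (your boundary conditions do not impose it, though a small translation would repair this); and the shooting argument itself, which is asserted rather than carried out. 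A workable repair would be to add to the linearized equation an explicit negative forcing of size $C/R$ on $[-T_R,T_R]\setminus[-1,1]$ and re-do the endpoint matching and monotonicity checks, but at that point you are essentially reconstructing the paper's explicit barrier in less transparent variables.
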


\begin{proof}
We construct $g_R$ such that
\be\label{100.0}
g_R'' + \frac{2(n-1)}{R} g_R' \le W'(g_R) + \frac{C}{R} \chi_{|g_R| \le c} ,
\ee
for some $c$ small depending only on $W$ to be made precise later. Then 3) follows once we check that $\{|g_R| \le c\} \subset [-1,1]$.

We consider $g_R(t)=s$ as a variable on the interval $t \in [t_R^-,t_R^+]$ where $g_R$ is strictly increasing. We let $h_R$ be defined as $$\frac{ds}{dt}=g_R'=:\sqrt{2h_R(s)},$$ and then by the chain rule we find
$$g_R''=\frac{d}{ds}h_R.$$
Now \eqref{100.0} is equivalent to a first order differential inequality for $h_R$ in the $s$ variable
\be \label{100}
h_R' + \frac{2^\frac 32(n-1)}{R} \sqrt{h_R} \le W'(s) + \frac{C}{R}\chi_{[-c,c]}.
\ee

We define $h_R$ in $[s_R-1, 1]$, with $s_R=\frac {C_1} {R}$, as follows
\be\label{h}
h_R(s):= 
\left \{
\begin{array}{ccc}  
W(s)-W(s_R)-\frac{C_2}{R}(s+1-s_R), \quad \quad \mbox{in $[s_R-1, -c] $} \\

\   \\

W(s) + \frac{C_2}{R} \varphi(s), \quad \quad \mbox{in $[-c,c]$}, \\

\  \\

W(s) + \frac{C_2}{R}(1-s), \quad \quad\quad \mbox{in $[c, 1] $,}
\end{array}
\right.
\ee
with $C_2$ large (depending on $n$ and $\max W$) such that \eqref{100} holds outside $[-c,c]$. We choose $C_1$ depending on $C_2$ so that the quadratic behavior of $W$ near $\pm1$ implies 
\be\label{101}
h_R (s) \sim (s+1)^2-s_R^2 \quad \mbox{in $[s_R-1, -c]$}, \quad \quad h_R \sim (1-s)^2 + \frac 1 R (1-s) \quad \mbox{in $[c,1]$}.
\ee

The function $\varphi$ is chosen such that $h_R$ is $C^1$ and $\varphi(0)=0$, $\varphi <0$ in $[-c,0)$, $\varphi>0$ in $(0,c]$, and also so that  
$\| \varphi \|_{C^{0,1}} $ is bounded by a constant depending on $c$ and the other universal constants. 
This implies that \eqref{100} holds also in $[-c,c]$ provided that we choose $C$ sufficiently large depending on $c$ and $W$. 

We recover $g_R$ from $h_R$ by inverting the functional $H_R:[s_R,1] \to \R$,
\be \label{HR}
H_R(s) = \int_0^s \frac{1}{\sqrt {2h_R(z)}}dz,
\ee  
and then
$$g_R(t)=H_R^{-1}(t), \quad t_R^-=H_R(s_R-1), \quad t_R^+=H_R(1).$$
From \eqref{101} we easily obtain that $|t_R^\pm| \le C \log R$.
We denote by 
\be \label{H}
H(s):=  \int_0^s \frac{1}{\sqrt {2W(z)}}dz = g^{-1}(s).
\ee  
Since $|h_R-W| \le \frac CR$ we see that
$$H_R-H=O(R^{-1}) \quad \mbox{in any compact interval of $(-1,1)$.} $$
which implies that $g_R-g=O(R^{-1})$ in any compact interval of $\R$ which proves 2).

Now we see that we can choose $c$ depending only on $W$ such that  $\{|g_R| \le c\} \subset [-1,1]$ for all large $R$, and the lemma is proved.

\end{proof}

\begin{rem}
Since $h_R \le W$ in $[s_R,0]$, and $h_R \ge W$ in $[0,1]$ we obtain
$$H_R \le H  \quad \Longrightarrow  \quad g_R \ge g .$$

\end{rem}

Next we give a version of Lemma \ref{o1} and construct a tighter approximation $\rho_R$ of $g$ in the interval $[-\frac R2, \frac R2]$ instead of the $\log R$-sized interval $[t_R^-,t_R^+]$. 
\begin{lem}\label{o2}

For all $R$ large there exist approximations $\rho_R: \mathbb R \to [-1,1]$ of the 1D solution $g$ such that  

1) $\rho_R$ is $C^2$ and strictly increasing in an interval $[q_R^-,q_R^+]$ with $|q_R^\pm| \le \frac R 2$ and 
$$ \mbox{$\rho_R$ is constant in $(- \infty, q_R^-]$  and $\rho_R=1$ in $[q_R^+, \infty)$,}$$

2) $\rho_R(0)=0$, $\rho_R'(q_R^-)=0$,  and
$$|\rho-\rho_R| \le \frac C R \quad \mbox{in $[-4,4]$},$$

3)  $$\rho_R'' + \frac{2(n-1)}{R} \rho_R' \le W'(g_R) + \frac{C}{R} \chi_{[-1,1]} ,$$

4) $$ \rho_{R}(s) \le g_{Q}(s + R^{-\frac 15}) \quad \quad \mbox{if $Q \le (4R)^\frac 52$}.$$

\end{lem}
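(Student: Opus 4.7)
My approach is to imitate the construction of Lemma \ref{o1} step by step, with the crucial change that the perturbation of $W$ is chosen with the opposite sign (and smaller magnitude) so that $\rho_R \le g$ pointwise; from this comparison, together with $g_Q \ge g$ from the remark following Lemma \ref{o1}, property 4) will follow essentially for free. As in Lemma \ref{o1}, writing $\rho_R'(t) = \sqrt{2\,k_R(s)}$ with $s = \rho_R(t)$ reduces property 3) to the first-order inequality
\[
k_R'(s) + \frac{2\sqrt{2}(n-1)}{R}\sqrt{k_R(s)} \;\le\; W'(s) + \frac{C}{R}\chi_{[-c,c]}(s),
\]
and $\rho_R$ is recovered by inverting $H_\rho(s):=\int_0^s dz/\sqrt{2 k_R(z)}$ and extending constantly outside the interval where $k_R>0$.

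I would take $k_R$ to coincide with $W$ on a fixed neighborhood $[-c,c]$ of the origin, to be a \emph{downward} perturbation $k_R \le W$ on $[c, s_R^+]$, and an \emph{upward} perturbation $k_R \ge W$ on $[s_R^-, -c]$, with $s_R^\pm = \pm(1-\sigma_R)$ for $\sigma_R \sim R^{-3}$ and with $k_R(s_R^\pm)=0$; in the thin tails $[s_R^+ - \sigma_R, s_R^+]$ and $[s_R^-, s_R^- + \sigma_R]$ one arranges a linear vanishing (exactly as in \eqref{h}, but with the sign of the $1/R$ correction reversed) so that $\int dz/\sqrt{2 k_R}$ converges at the endpoints. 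The ODE inequality is then verified as in \eqref{100}--\eqref{101}. Properties 1) and 3) hold by construction, with $|q_R^\pm|=|H_\rho(s_R^\pm)|=O(\log R)\le R/2$, and property 2) is automatic: since $k_R=W$ on $[-c,c]$, $H_\rho\equiv H$ there, so $\rho_R\equiv g$ on $[-t_c, t_c]\supset[-4,4]$ for $c$ small enough (in particular $|\rho_R - g|\equiv 0 \le C/R$ on $[-4,4]$).

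For property 4), the remark after Lemma \ref{o1} gives $h_Q \ge W$ on $[0,1]$ and $h_Q \le W$ on $[s_Q-1,0]$; combined with the sign conventions $k_R \lessgtr W$ on the two sides, this yields $k_R \le h_Q$ on $[0,\min(s_R^+,s_Q^+)]$ and $k_R \ge h_Q$ on $[\max(s_R^-,s_Q^-),0]$. Comparing the integrals gives $H_\rho \ge H_{g_Q}$ on the common range, which inverts to $\rho_R(t)\le g_Q(t)$ on the common increasing range. In the lower flat tail $t\le q_R^-$, $\rho_R(t)=-1+\sigma_R \le -1+C_1/Q = g_Q(t_Q^-)$ because $\sigma_R \sim R^{-3} \le C_1/(4R)^{5/2} \le C_1/Q$ whenever $Q\le(4R)^{5/2}$; in the upper flat tail, the fact that $q_R^+ \sim 3\log R$ exceeds $t_Q^+ \sim (5/2)\log R + O(1)$ forces $\rho_R(t) \le 1 = g_Q(t+R^{-1/5})$ there. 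Monotonicity of $g_Q$ then upgrades $\rho_R \le g_Q$ to $\rho_R(t) \le g_Q(t+R^{-1/5})$ for every $t\in\R$. The main technical obstacle is arranging the tails of $k_R$ so that the ODE inequality, the sign conventions $k_R \lessgtr W$, and the quantitative size $\sigma_R \sim R^{-3}$ hold simultaneously, but this is a direct adaptation of the explicit formula \eqref{h}.
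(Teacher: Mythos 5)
Your plan is built around forcing $\rho_R\le g$ and then getting 4) ``for free'' from $g\le g_Q$, but this is incompatible with the statement you are proving. Property 1) requires $\rho_R=1$ on $[q_R^+,\infty)$ with $q_R^+\le R/2$ finite; since $g<1$ everywhere, no function with $\rho_R\le g$ can ever equal $1$, so your construction (which caps out at $1-\sigma_R$) simply does not satisfy 1). Worse, it also violates 3): on the upper flat piece $t\ge q_R^+$ you have $\rho_R''=\rho_R'=0$ while $W'(1-\sigma_R)\approx -W''(1)\,\sigma_R<0$, and the slack term $\frac{C}{R}\chi_{[-1,1]}$ is unavailable there because $q_R^+\gtrsim |\log\sigma_R|\gg 1$. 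This is exactly the asymmetry that dictates the signs in Lemma \ref{o1}: a radial \emph{supersolution} may be truncated at a constant slightly \emph{above} $-1$ (where $W'>0$), but at the top it must reach the value $1$ exactly (where $W'=0$); truncating at a constant slightly below $1$ destroys the supersolution property, and reaching $1$ in finite distance forces the profile to lie \emph{above} $g$ near the top, i.e.\ the perturbation near $s=1$ must be upward, not downward. Your left-side prescription is also internally inconsistent: you ask for $k_R\ge W$ on $[s_R^-,-c]$ together with $k_R(s_R^-)=0$, while $W(-1+\sigma_R)>0$. Finally, the claim that $k_R=W$ on $[-c,c]$ gives $\rho_R\equiv g$ on $[-4,4]$ would require $c\ge g(4)$, i.e.\ $c$ close to $1$, whereas $c$ must be small so that the region $\{|\rho_R|\le c\}$ is contained in $t\in[-1,1]$.

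The shift $R^{-1/5}$ in 4) is not cosmetic; it is there precisely because the one-sided comparison $\rho_R\le g_Q$ is impossible. The paper keeps the same sign pattern as in \eqref{h} (below $W$ near $-1$, above $W$ near $+1$) but replaces the $O(1/R)$ \emph{linear} correction near $s=1$ by $\frac{C_2}{R}(1-s)^2+p_R(1-s)$ with $p_R=e^{-c_1R}$, see \eqref{hb}. Then $\rho_R$ still reaches $1$, now at $q_R^+\sim c_1R\le R/2$ (which is why 1) asks only $|q_R^\pm|\le R/2$ rather than $C\log R$), and 4) is proved quantitatively: one shows $H_Q-\bar H_R\le \frac{C}{R}|\log s_Q|\le R^{-1/5}$ on $[s_Q-1,1]$ by integrating the pointwise bounds $|\bar h_R^{-1/2}-W^{-1/2}|\le \frac{C}{R}(1\mp s)^{-1}$ away from $\pm1$ and using $\bar h_R\le h_Q$ near $s=1$; inverting this inequality of the ``inverse functions'' gives $\rho_R(s)\le g_Q(s+R^{-1/5})$. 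If you want to salvage your argument you must adopt this two-sided, quantitative comparison; the qualitative ordering argument cannot be repaired.
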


We remark that we can choose $C$ to be the same constant in both lemmas \ref{o1}, \ref{o2} by taking it sufficiently large. 

\begin{proof}
We proceed as in the proof of Lemma \ref{o1} above but now we take a better approximation for $h_R$ near $\pm 1$ as follows. 

Let $$p_R:=e^{-c_1 R},$$ for some small $c_1$ and define $\bar h_R$ in $[-1+p_R,1]$ as
\be\label{hb}
\bar h_R(s):= 
\left \{
\begin{array}{ccc}  
W(s)-W(p_R)-\frac{C_2}{R}\, [(s+1)^2-p_R^2], \quad \quad \mbox{in $[p_R-1, -c] $,} \\

\   \\

W(s) + \frac{C_2}{R}\bar \varphi(s), \quad \quad \mbox{in $[-c,c]$}, \\

\  \\

W(s) + \frac{C_2}{R}(1-s)^2 + p_R(1-s), \quad \quad\quad \mbox{in $[c, 1] $.}
\end{array}
\right.
\ee

It is straightforward to check that $\bar h_R$ satisfies \eqref{100} provided that $C_2$ is chosen large depending only on $W$ and $n$, and moreover
$$\bar h_R (s) \sim (s+1)^2-p_R^2 \quad \mbox{in $[p_R-1, 0]$},\quad \quad \bar h_R \sim (1-s)^2 + p_R(1-s)\quad \mbox{in $[0,1]$}.$$
We denote as before
$$\bar H_R(s) = \int_0^s (2 {\bar h}_R(z))^{-\frac 12}dz, \quad \quad \rho_R(t)=\bar H_R^{-1}(t).$$  
Then an easy computation gives
$$ q_R^+=\bar H_R(1) \le \frac R 2, \quad q_R^-=\bar H_R(p_R) \ge - \frac R 2,$$
provided that we choose $c_1$ sufficiently small and we proved 1), 2) and 3).

In order to prove 4) we need to show that 
$$H_Q- \bar H_ R  \le  R^{-\frac 1 5}  \quad \quad \mbox{in $[s_Q-1,1]$,} \quad \quad Q \le (4R)^ \frac 5 2 ,$$
provided that $C$ is chosen sufficiently large, with $H_Q$, $s_Q$ defined in Lemma \ref{o1}, see \eqref{h}, \eqref{HR}.

First we check that 
\be\label{102}
H- \bar H_R \le \frac C R \, |\log a|  \quad \mbox{in $[-1+a,1-a]$, provided that} \, a \ge p_R^\frac 12.
\ee
Indeed, from the definition of $\bar h_R$ we find 
$$\bar h_R(s)^{-\frac 12}- W(s)^{-\frac 12} \le \frac C R (1+s)^{-1} \quad \mbox{in $[-1+p_R^{\frac 12},0]$},$$
and
$$W(s)^{-\frac 12}- \bar h_R(s)^{-\frac 12}\le \frac C R (1-s)^{-1} \quad \mbox{in $[0,1-p_R^\frac 12]$},$$
and \eqref{102} follows by integrating these inequalities.

Notice that
$$s_Q=C_1 Q^{-1} \ge  p_R^\frac 12,$$
and also that $\bar h_R \le h_Q$ in $[1-s_Q,1]$.
This means that the maximum of $H_Q - \bar H_ R$ occurs in the interval $J:=[-1+s_Q, 1-s_Q].$ We use \eqref{102} and $H \ge H_Q$ to conclude
$$\max (H_Q-\bar H_R) \le \max_J (H-\bar H_R) \le \frac C R |\log s_Q|  \le  R^{-\frac 15}.$$

\end{proof}

\section{Weak Harnack inequality}\label{WH}

In this section we give a version of Harnack inequality which appears in \cite{S2} that is useful for our purpose. 
The estimates are written for a set $\Gamma \subset \R^{n+1}$ which corresponds in the classical theory to the graph of a function that solves a second order elliptic equation.

{\it Notation}

We denote points in $\R^{n+1}$ as $X=(x,x_{n+1})$ with the first coordinate $x \in \R^n$. 

A ball of center $X$ and radius $r$ in $\R^{n+1}$ is denoted by $\mathcal B_r(X)$, and a ball of center $x$ and radius $r$ in $\R^n$ is denoted by $B_r(x)$.
 
We denote by $P_{a,Y}$ a quadratic polynomial of opening $-a$,  
$$P_{a,Y}(x):=-\frac a 2 |x-y|^2 + y_{n+1}, \quad \quad \quad Y=(y,y_{n+1}),$$
and we say that $Y$ is the vertex of $P_{a,Y}$ and $y \in \R^n$ is the center of $P_{a,Y}$.

 \

Assume that $\Gamma \subset \R^{n+1}$ is a closed set in the cylinder $\{|x| <1\}$ with the following property:

\

$(P)$  There exists a constant $\Lambda>0$ and an interval $I \subset (0, \infty)$ such that if the graph of a paraboloid $P_{a,Y}$ 
$$x_{n+1}=P_{a,Y}(x), \quad \quad \mbox{with} \quad a\in I, \quad |y| \le 1,$$
 is tangent by below to $\Gamma$ at a point $Z=(z,z_{n+1})$ with $|z| \le 3/4$, then $\Gamma$ is a $C^2$ graph in a neighborhood of $Z$ with second derivatives bounded by $\Lambda a$, i.e. there exists a small $r>0$ such that
 $$  \Gamma \cap \mathcal B_r(Z) =\{(x,v(x))|  \quad v \in C^2 \quad \mbox{and} \quad \|D^2v(z)\| \le \Lambda a\}.$$ 

\begin{defn}
For each $a \in I$ we  denote by $\mathcal D_a \subset \R^{n+1}$ the set of all ``contact" points $Z$ that appear in the property (P) above and by $D_a \subset \R^n$ the projection of $\mathcal D_a$ along $e_{n+1}$ onto the first coordinate.
\end{defn}

Notice that $\mathcal D_a$, $D_a$ are closed sets and 
$$\mathcal D_a \subset \Gamma \cap \{|x| \le 3/4\}, \quad \quad D_a \subset B_{3/4}.$$
Also for any $z \in D_a$ there exists a unique $Z \in \mathcal D_a$ which projects onto $z$.

In this section positive constants depending on $n$ and $\Lambda$ are called universal constants.

\begin{prop}[Weak Harnack inequality]\label{mem}
Assume that $\Gamma \subset \{x_{n+1} \ge 0\}$ and
\be\label{ic}
\Gamma \cap \left( B_{1/2} \times [0, \theta] \right) \ne 0.
\ee
Given $\mu>0$ small, there exists $M$ large depending only on $\mu$, $n$ and the universal constants such that if $\Gamma$ has the property $(P)$ with $I=[\theta, M \theta]$ then
$$|B_{1/2} \setminus D_{M \theta}| \le \mu |B_{1/2}|.$$
\end{prop}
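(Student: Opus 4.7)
The proof is an Alexandrov--Bakelman--Pucci-type argument adapted to the set-theoretic hypothesis. For each admissible vertex $y$ and opening $a := M\theta$, define the highest paraboloid of opening $-a$ centered at $y$ lying below $\Gamma$,
\[
t^*(y) \;:=\; \inf_{(x,x_{n+1})\in\Gamma}\Bigl(x_{n+1} + \tfrac{a}{2}|x-y|^2\Bigr),
\]
so that $P_{a,(y,t^*(y))}$ is tangent from below to $\Gamma$ at some contact point $Z(y)=(z(y),z_{n+1}(y))$. A routine inf-convolution computation shows that $\psi(y):=\tfrac{1}{2}|y|^2 - t^*(y)/a$ is convex, and that $z(y)=\nabla\psi(y)$ at points of differentiability; in other words, the contact map is the gradient of a convex function, which is the structural input for an area-formula argument.

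Pick $X_0=(x_0,x_{0,n+1})\in\Gamma\cap(B_{1/2}\times[0,\theta])$ from \eqref{ic}. Then $t^*(y)\le\theta+(a/2)|x_0-y|^2$, while $z_{n+1}\ge 0$ gives $(a/2)|z(y)-y|^2\le t^*(y)$, so $|z(y)-y|^2\le 2/M+|x_0-y|^2$. For $M$ large and $y$ in a sufficiently small ball $E$ about $x_0$, this forces $z(y)\in B_{3/4}$, so property $(P)$ applies and $z(y)\in D_{M\theta}$. At such a contact, property $(P)$ writes $\Gamma$ locally as a $C^2$ graph $x_{n+1}=v(x)$ with $\|D^2 v(z)\|\le\Lambda a$, while tangency from below forces $D^2 v(z)\ge -aI$. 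Differentiating the stationarity identity $y = z + a^{-1}\nabla v(z)$ yields
\[
D^2\psi(y) \;=\; \bigl(I + a^{-1}D^2 v(z)\bigr)^{-1},
\]
whose eigenvalues lie in $[(1+\Lambda)^{-1},+\infty)$. Therefore $\det D^2\psi(y)\ge(1+\Lambda)^{-n}$ almost everywhere on the locus of contacts.

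The Monge--Amp\`ere area formula for convex functions then yields $|\nabla\psi(E)|=\int_E \det D^2\psi\,dy\ge(1+\Lambda)^{-n}|E|$, producing a \emph{single-scale} bound $|D_{M\theta}\cap B_{1/2}|\ge c_0|B_{1/2}|$ with $c_0$ universal. To upgrade this fixed fraction to a $(1-\mu)$-covering, I would iterate on dyadic scales $a_k=2^k\theta$ for $k=0,\dots,\lfloor\log_2 M\rfloor$: an easy comparison shows $D_{a_k}\subseteq D_{a_{k+1}}$ (replacing a tangent-below paraboloid by the steeper one with the same contact point produces a globally lower paraboloid), so every intermediate scale still contributes to $D_{M\theta}$. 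In each uncovered subregion of $B_{1/2}\setminus D_{a_{k-1}}$ one re-applies the basic ABP step at the next scale, locating a fresh ``low point'' of $\Gamma$ from the very failure of contact at the previous scale. After $\sim\log(1/\mu)$ iterations the uncovered measure decays geometrically below $\mu|B_{1/2}|$. The main obstacle is making this iteration clean: consistently identifying low points in each leftover region and tracking the geometric parameters across scales is the delicate heart of a weak Harnack argument of this type.
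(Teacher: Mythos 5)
Your single-scale ABP step is sound and is essentially the paper's Lemma \ref{ABP} (phrased there via the contact-to-vertex map $y(z)=z+a^{-1}Dv(z)$ rather than an inf-convolution), but by itself it only yields a fixed universal fraction $|D_{M\theta}\cap B_{1/2}|\ge c_0|B_{1/2}|$; that is the content of Proposition \ref{p3}, not of Proposition \ref{mem}, whose whole point is the $(1-\mu)$ density with $M=M(\mu)$. The upgrade is the entire difficulty, and your sketch of it does not work as stated: the ``failure of contact'' of paraboloids of opening $a_k$ over an uncovered region carries no localized information to which a plain ABP step at opening $2a_k$ could be applied. When you slide steeper paraboloids whose vertices lie over that region, nothing prevents all the new contact points from landing back in the already-covered set or far away inside $B_{3/4}$, so the uncovered measure need not shrink at all. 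You acknowledge this yourself (``the delicate heart''), so the key idea is missing rather than merely left technical.

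What the paper does at exactly this point is Lemma \ref{mit}: if $D_a$ meets a ball $\bar B_r(x_0)$, then $|D_{Ca}\cap B_{r/8}(x_0)|\ge cr^n$. Its proof is not a repetition of the ABP step but a barrier construction: to the tangent paraboloid at the known contact point one adds the rescaled bump $ar^2\phi_\omega\left(\frac{x-x_0}{r}\right)$, where $\phi$ is a Krylov--Safonov type profile $\alpha^{-1}(|x|^{-\alpha}-1)$ with $\alpha=\Lambda+2$, capped by a quadratic of opening $q$ near its center. Outside the quadratic cap the composite function has at each point a tangent paraboloid by below of some intermediate opening $\tilde a$, but its Hessian there exceeds $\Lambda\tilde a$, so property $(P)$ --- which is assumed on the whole interval $I=[\theta,M\theta]$ of openings precisely for this purpose --- forbids contact with $\Gamma$ there; hence the new contact point is forced into $x_0+rU_\omega$, where the barrier is a paraboloid of opening $(1+q)a$, and varying the cap center $\omega$ together with Lemma \ref{ABP} gives the stated measure bound. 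Feeding Lemma \ref{mit} into the Vitali-type Covering Lemma then yields the geometric decay $|B_{5/8}\setminus D_{C^ka}|\le(1-c_1)^{k-C_n}|B_{5/8}|$, and one concludes by taking $k$ (hence $M\sim C^k$) large depending on $\mu$. Your dyadic-scale monotonicity $D_{a_k}\subset D_{a_{k+1}}$ is correct but is the trivial part; the localization-by-barrier mechanism of Lemma \ref{mit} and the covering iteration are the missing ingredients your argument would need.
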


\begin{rem}\label{memr}
From \eqref{ic} we obtain that 
$$\mathcal D_{M\theta} \subset \{ x_{n+1} \le  8 M \theta\}.$$
\end{rem}

We also provide a similar result.

\begin{prop}[Estimate in measure]\label{p3}
Assume that $$\theta e_{n+1} \in \Gamma \subset \{x_{n+1} \ge 0\},$$ and
$\Gamma$ satisfies property $(P)$ with $a= 8 \theta$. Then
$$ \mathcal D_{a} \cap \left( B_{1/2} \times [0, \frac 32 \theta] \right) $$
projects along $e_{n+1}$ into a set of $\mathcal H^n$-measure greater than $c>0$, with $c$ universal.
\end{prop}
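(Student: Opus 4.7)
My approach is a sliding-paraboloid argument of Alexandrov--Bakelman--Pucci type, with property $(P)$ serving in place of the classical second-derivative bound.

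Fix a small universal constant $\eta > 0$. For each vertex center $y \in B_\eta \subset \mathbb R^n$, slide the paraboloid $P_{a,Y}(x) = t - \tfrac{a}{2}|x-y|^2$ with $a = 8\theta$ and $Y = (y,t)$ upward in $t$ from $-\infty$ until the graph first contacts $\Gamma$ at a point $Z(y) = (z(y), z_{n+1}(y)) \in \mathcal D_a$. Since $(0,\theta) \in \Gamma$ forces $P_{a,Y^*}(0) \le \theta$, we obtain $t^*(y) \le \theta(1 + 4|y|^2)$; combining this with $\Gamma \subset \{x_{n+1} \ge 0\}$ and the contact identity $z_{n+1}(y) = t^*(y) - 4\theta|z(y) - y|^2$ yields, for $|y| \le \eta$ and $\eta$ small,
\[
0 \le z_{n+1}(y) \le \tfrac{3}{2}\theta, \qquad |z(y) - y|^2 \le \tfrac{1}{4} + \eta^2.
\]

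By property $(P)$, near $Z(y)$ the set $\Gamma$ is the graph of a $C^2$ function $v \ge 0$ with $\|D^2 v\| \le \Lambda a$, and the tangency of $P_{a,Y^*}$ from below at $Z(y)$ gives the identities $\nabla v(z) = -a(z-y)$ and $v(z) + \tfrac{a}{2}|z-y|^2 = t^*(y)$. The first reads $y = f(z) := z + a^{-1}\nabla v(z)$, so $f$ is $C^1$ with Jacobian $I + a^{-1} D^2 v(z)$ of operator norm at most $1 + \Lambda$. Granting the inclusion $z(B_\eta) \subset B_{1/2}$ (addressed next), every $y \in B_\eta$ has at least one preimage under $f$, so the area formula gives
\[
|B_\eta| \le \int_{z(B_\eta)} \bigl|\det\bigl(I + a^{-1} D^2 v(z)\bigr)\bigr|\,dz \le (1+\Lambda)^n\,\mathcal H^n(z(B_\eta)),
\]
and hence $\mathcal H^n(z(B_\eta)) \ge (1+\Lambda)^{-n}|B_\eta|$, the desired universal lower bound.

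The main obstacle I foresee is verifying $z(B_\eta) \subset B_{1/2}$: the naive bound $|z(y) - y| \le \tfrac{1}{2}\sqrt{1+4\eta^2}$ is only barely insufficient. To close the gap, I would exploit the one-sided constraint $v \ge 0$ (a consequence of $\Gamma \subset \{x_{n+1} \ge 0\}$). Taylor-expanding $v$ at $z = z(y)$ in the direction $-\nabla v(z)/|\nabla v(z)|$ with step $s$, the bound $\|D^2 v\| \le \Lambda a$ combined with $v(z+se) \ge 0$ gives
\[
0 \le v(z) - s\,|\nabla v(z)| + \tfrac{1}{2}\Lambda a\, s^2,
\]
so optimizing in $s$ yields the sharp inequality $v(z) \ge |\nabla v(z)|^2/(2\Lambda a)$. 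Substituting $|\nabla v(z)| = a|z - y|$ into $t^*(y) = v(z) + \tfrac{a}{2}|z-y|^2 \le \theta(1 + 4\eta^2)$ sharpens the displacement bound to $|z(y) - y|^2 \le \tfrac{\Lambda}{\Lambda+1}(\tfrac{1}{4} + \eta^2)$, which is strictly less than $\tfrac{1}{4}$; for $\eta$ small (universal, depending only on $\Lambda$ and $n$) this forces $|z(y)| \le |y| + |z(y)-y| < \tfrac{1}{2}$. A minor side-issue is that the Taylor step $s \sim |z-y|/\Lambda$ must stay inside the neighborhood on which property $(P)$ guarantees the graph representation, which should be harmless since the natural length scale of property $(P)$ at opening $a$ is at least of order $1/\sqrt{a}$, comfortably larger than the step size required.
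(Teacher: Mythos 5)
Your overall strategy is the same as the paper's: slide the paraboloids $P_{a,Y}$ with centers in a small ball, check that the first contact points land in the cylinder $B_{1/2}\times[0,\tfrac32\theta]$, and then run the area-formula (ABP) estimate on the map $z\mapsto y(z)=z+a^{-1}\nabla v(z)$; the paper does exactly this, with centers in $B_{1/16}$ and with the measure step packaged as Lemma \ref{ABP}. Your height bound $z_{n+1}\le t^*\le\theta(1+4|y|^2)\le\tfrac32\theta$, the tangency identity $\nabla v(z)=-a(z-y)$, and the Jacobian bound $\|I+a^{-1}D^2v(z)\|\le 1+\Lambda$ are all correct and coincide with the paper's computations.

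The gap is precisely at the step you yourself flag as the main obstacle. The inequality $v(z)\ge|\nabla v(z)|^2/(2\Lambda a)$ is obtained by Taylor expanding $v$ along a segment of length $s\sim|\nabla v(z)|/(\Lambda a)=|z-y|/\Lambda\approx 1/(2\Lambda)$, a fixed macroscopic length, and it needs both the graph representation of $\Gamma$ and the Hessian bound $\|D^2v\|\le\Lambda a$ along that entire segment. Property $(P)$ gives neither: it provides the graph structure only in $\mathcal B_r(Z)$ for some unquantified small $r>0$, and the bound $\|D^2v(z)\|\le\Lambda a$ only at the contact point itself. Your remark that the natural length scale of property $(P)$ at opening $a$ is of order $1/\sqrt a$ is intuition imported from the PDE application (where Proposition \ref{p1} does give curvature control in balls of fixed size), but it is not contained in the abstract hypothesis, so the sharpened displacement bound $|z-y|^2\le\tfrac{\Lambda}{\Lambda+1}(\tfrac14+\eta^2)$ is unjustified. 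Moreover, with the opening fixed at $a=8\theta$ no choice of $\eta$ rescues the naive estimate, since $|z|\le\eta+\tfrac12\sqrt{1+4\eta^2}>\tfrac12$ for every $\eta>0$; so as written your argument does not close. The honest resolution --- and the way the paper's one-line ``it is easy to check'' should be read --- is that the radius $1/2$ is not essential: the naive bound already confines the contact points to $\bar B_{1/2+2\eta}\times[0,\tfrac32\theta]\subset B_{3/4}\times[0,\tfrac32\theta]$, so they do belong to $\mathcal D_a$, and such a slightly larger cylinder is all that the application in Section 3 requires (equivalently one may enlarge the cylinder in the statement or the constant $8$ in $a=8\theta$); obtaining the literal inclusion in $B_{1/2}$ from property $(P)$ alone would require exactly the kind of quantified neighborhood information your fix tacitly assumes.
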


To each $Z \in \mathcal D_a$ we associate its corresponding vertex $Y(Z)$ of the tangent paraboloid to $\Gamma$ at $Z$.
Next we obtain an estimate for the differential of the map $Z \to Y(Z)$. This is a variant of Alexandrov-Bakelman-Pucci estimate for uniformly elliptic equations.

\begin{lem}[ABP estimate]\label{ABP}
Assume $\Gamma$ satisfies property $(P)$ for some $a>0$ and let $F\subset \bar{B}_1$ be a closed set. For each $y \in F$, we slide the
graph of the paraboloids $P_{Y,a}$ with $Y=(y,y_{n+1})$ 
by below and increase $y_{n+1}$ till we touch $\Gamma$ for the first time. Suppose the set of all contact points projects along $e_{n+1}$ in a set $E \subset \bar B_{3/4}$. 
Then $$|E| \ge c|F|,$$ with $c>0$ a small
universal constant.

\end{lem}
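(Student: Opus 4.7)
The plan is to build a map $\Phi$ from the contact projections $E\subset D_a$ onto (a set containing) $F$ whose differential is bounded independently of $a$, and then conclude via the area formula, in the spirit of the classical Alexandrov--Bakelman--Pucci argument.

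First I would use property $(P)$ to identify, for each $y\in F$, a contact point $Z(y)=(z(y),z_{n+1}(y))$ produced by the sliding procedure. Since the paraboloid $P_{Y,a}$ with $Y=(y,y_{n+1})$ touches $\Gamma$ from below at $Z(y)$ and $|y|\le 1$, property $(P)$ forces $Z(y)\in\mathcal D_a$; by hypothesis the projection $z(y)$ lies in $E\subset\bar B_{3/4}$, and $\Gamma$ coincides in a neighborhood of $Z(y)$ with the graph of a $C^2$ function $v$ satisfying $\|D^2v(z(y))\|\le\Lambda a$.

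Next I would extract the identity
\[y = z(y) + \frac{1}{a}\nabla v(z(y)),\]
obtained by matching the value and gradient of $v$ with those of $P_{Y,a}$ at $z(y)$. This motivates defining, on each patch supplied by $(P)$, the map $\Phi(z):=z+a^{-1}\nabla v(z)$, so that $\Phi(z(y))=y$ for every $y\in F$, and in particular $F\subset\Phi(E)$. The differential is $D\Phi(z)=I+a^{-1}D^2v(z)$, whose operator norm is at most $1+\Lambda$ independently of $a$; hence $\Phi$ is $(1+\Lambda)$-Lipschitz on each patch and $|\det D\Phi|\le(1+\Lambda)^n$. Covering the compact set $E$ by finitely many such patches and applying the classical area formula for Lipschitz maps on $\R^n$ yields
\[|F|\le|\Phi(E)|\le\int_E|\det D\Phi(z)|\,dz\le(1+\Lambda)^n\,|E|,\]
which proves the estimate with $c=(1+\Lambda)^{-n}$.

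The main obstacle is that $D_a$ is only known to be closed and the local graph function $v$ is patch-dependent, so $\Phi$ has no intrinsic global definition. The resolution is that the patches furnished by $(P)$ cover $E$ by $C^2$ graphs with \emph{uniform} second-derivative bounds, so the pointwise Jacobian bound holds $\mathcal H^n$-a.e.\ on $E$, which is all that the area formula requires. A small additional care is needed because the sliding procedure may produce several contact points for a given $y$; one simply selects one, which is harmless since the argument only uses the inclusion $F\subset\Phi(E)$.
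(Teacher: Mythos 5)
Your proposal is correct and follows essentially the same route as the paper: the map $z \mapsto y(z)=z+\frac{1}{a}Dv(z)$ from contact points to vertices, the Jacobian bound $\|I+\frac{1}{a}D^2v\|\le \Lambda+1$ coming from property $(P)$, and the area-formula estimate $|F|\le \int_E |\det D_z y|\,dz \le C|E|$. Your added remarks on patching, surjectivity versus equality, and possible multiplicity of contact points only make explicit what the paper leaves implicit.
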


\begin{proof}

Near a contact point $Z$, the set $\Gamma$ can be written as a graph of a function $v$ with $\|D^2v(z)\|\le \Lambda a$. The corresponding center $y(z)$ is given by $$y(z)=z+\frac 1 a D v(z).$$ 
The differential map is $$D_zy=I+\frac 1 a D^2v(z),$$ thus
$$ \|D_zy \| \le \Lambda +1.$$ This gives
$$|F| = \int_E |\det D_zy| \, dz \le C|E|.$$
\end{proof}


\begin{proof}[Proof of Proposition \ref{p3}]

We slide by below paraboloids of opening $-a$ and centers $y \in B_{1/16}$. From the hypotheses it is easy to check that all contact points $Z$ are included in the cylinder $$B_{1/2} \times [0, \frac 32 \theta] ,$$ and therefore they belong to $\mathcal D_a$. Now the conclusion follows from Lemma \ref{ABP}.

\end{proof}

\begin{lem}\label{mit}
There exist positive universal constants C, c such that if $\Gamma$ has the property (P) for $I=[a,Ca]$ and
$$D_a \cap \bar B_{r}(x_0) \ne \emptyset$$ for some ball $\bar B_r(x_0) \subset B_{3/4}$,
 then $$|D_{Ca} \cap B_{r/8}(x_0)|\ge c r^n.$$
\end{lem}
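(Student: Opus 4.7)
The plan is to apply the ABP-type Lemma~\ref{ABP} to sliding paraboloids of opening $Ca$ after a parabolic rescaling that normalizes the setup.

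Fix $z_0\in D_a\cap\bar B_r(x_0)$ with contact point $Z_0=(z_0,z_0^{n+1})$ and tangent paraboloid $P_{a,Y_0}$, $|y_0|\le 1$. By property~(P) applied to this paraboloid, $\Gamma$ is locally a $C^2$ graph $x_{n+1}=v(x)$ near $Z_0$ with $\|D^2 v(z_0)\|\le\Lambda a$, and globally $P_{a,Y_0}(x)\le x_{n+1}$ for $(x,x_{n+1})\in\Gamma$. I would then rescale by setting
$$\hat x=\frac{x-x_0}{r},\qquad \hat x_{n+1}=\frac{x_{n+1}-z_0^{n+1}-Dv(z_0)\cdot(x-z_0)}{ar^2}.$$
In these coordinates $P_{a,Y_0}$ becomes the unit bowl $\hat P(\hat x)=-\tfrac12|\hat x-\hat z_0|^2$, where $\hat z_0=(z_0-x_0)/r$ satisfies $|\hat z_0|\le 1$; the contact point is $\hat Z_0=(\hat z_0,0)$ at the vertex; $\hat\Gamma$ lies above this bowl; and paraboloids of opening $-Ca$ become paraboloids of opening $-C$ (with an absorbable linear shift of center). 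Property~(P) transfers to property~(P) with $I=[1,C]$, and the problem reduces to showing $|\hat D_C\cap B_{1/8}(0)|\ge c$ at unit scale.

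Next I would apply Lemma~\ref{ABP} with opening $C$ and $F$ a small ball of universal radius, sliding $\hat P_{C,\hat y}$ upward from $-\infty$ until it first touches $\hat\Gamma$ at some $\hat Z^*(\hat y)$. Lemma~\ref{ABP} then yields $|\hat E|\ge c|F|$ with $\hat E$ the projection of contact points, and it suffices to show $\hat E\subset B_{1/8}(0)$. For this I plan to combine three ingredients: (i) the upper bound $\hat s^*(\hat y)\le\tfrac{C}{2}|\hat z_0-\hat y|^2$ on the vertex height coming from $\hat Z_0\in\hat\Gamma$; (ii) the lower bound $\hat z^*_{n+1}\ge-\tfrac12|\hat z^*-\hat z_0|^2$ from the bowl; and (iii) the $C^2$-graph upper barrier $\hat v(\hat x)\le\tfrac{\Lambda}{2}|\hat x-\hat z_0|^2$ for $\hat x$ near $\hat z_0$, provided by property~(P). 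Taking $C$ universally large in terms of $\Lambda$ should pin each contact point to a small neighborhood of the bowl vertex.

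The hard part will be this localization of $\hat E$ inside $B_{1/8}(0)$. Using only (i) and (ii), Cauchy--Schwarz gives $(C-1)|\hat z^*-\hat z_0|^2\le 2C(\hat z^*-\hat z_0)\cdot(\hat y-\hat z_0)$, i.e.\ $|\hat z^*-\hat z_0|\lesssim|\hat y-\hat z_0|$, which is not sharp enough when $\hat z_0$ is not itself small. The $C^2$-graph upper barrier (iii) provides the quantitative sharpening: because the paraboloid $\hat P_{C,\hat y}$ is much steeper than the allowed upper envelope for $\hat v$ near $\hat z_0$, the sliding must stop as soon as the paraboloid meets this graph, which happens close to where $\hat v$ attains its minimum. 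Once $\hat E\subset B_{1/8}(0)$ is established, undoing the rescaling yields $|D_{Ca}\cap B_{r/8}(x_0)|\ge cr^n$ with universal constants.
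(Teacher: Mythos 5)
Your reduction to unit scale and your use of Lemma~\ref{ABP} are fine, but the step you yourself flag as the hard part --- forcing the contact set into $B_{1/8}(0)$ --- has a genuine gap, and ingredient (iii) cannot close it. Property $(P)$ only asserts that \emph{at a contact point} $\Gamma$ is a $C^2$ graph in \emph{some} neighborhood (``there exists a small $r>0$''), with the Hessian bound $\|D^2v(z_0)\|\le \Lambda a$ holding at the point itself; there is no universal-size neighborhood and no modulus, so the barrier $\hat v(\hat x)\le \frac{\Lambda}{2}|\hat x-\hat z_0|^2$ is simply not available at unit scale. Worse, even if it were, it points the wrong way: an upper bound on $\hat\Gamma$ near $\hat z_0$ only makes it easier for the slid paraboloid to land near $\hat z_0$, which may be at hatted distance $1$ from the origin (i.e. $z_0\in\partial B_r(x_0)$), hence entirely outside $B_{1/8}(0)$. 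The real difficulty is that $(P)$ gives no information about $\Gamma$ away from contact points: $\hat\Gamma$ may hug the bowl only in a small neighborhood of $\hat z_0$ and need not even have points above $B_{1/8}(0)$ a priori, and the first contact of a \emph{pure} paraboloid of opening $C$ is determined by the shape of $\Gamma$ over the whole cylinder. Your estimates (i)--(ii) confine the contact only to an $O(1)$ (hatted) neighborhood of the center, i.e. an $O(r)$ neighborhood of $z_0$ in the original variables, and nothing in (iii) improves this to $B_{r/8}(x_0)$. Note also that Lemma~\ref{ABP} requires as a hypothesis that the contact points project into $\bar B_{3/4}$ (and membership in $\mathcal D_{Ca}$ requires contact in $B_{3/4}$ with centers in $B_1$), so the localization cannot be postponed.

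The missing idea is precisely the paper's device: do not slide pure paraboloids, but the composite barrier $\psi_\omega(x)=P_{a,Y_0}(x)+ar^2\phi_\omega\bigl(\frac{x-x_0}{r}\bigr)$, where $\phi_\omega$ is the radially symmetric profile built from $\alpha^{-1}(|x|^{-\alpha}-1)$, $\alpha=\Lambda+2$, capped by a quadratic of opening $-q$ and center $\omega$ on $B_{1/8}$. Because $\psi_\omega$ coincides with the tangent paraboloid $P_{a,Y_0}$ outside $B_r(x_0)$, and $P_{a,Y_0}$ is already below $\Gamma$ with contact at $z_0\in\bar B_r(x_0)$, the vertical sliding reaches $\Gamma$ at a nonpositive translation and cannot make first contact outside $B_r(x_0)$; in the annulus the profile is engineered so that at any point it admits a tangent paraboloid of opening $-\tilde a$ by below while its radial curvature exceeds $\Lambda\tilde a$, so property $(P)$ excludes contact there; hence contact is forced into the cap $x_0+rU_\omega\subset B_{r/8}(x_0)$, where $\psi_\omega$ is an honest paraboloid of opening $(1+q)a$, and varying $\omega$ over a small ball produces a set of centers of measure $\sim(\delta r)^n$ to which Lemma~\ref{ABP} applies, giving $|D_{(1+q)a}\cap B_{r/8}(x_0)|\ge cr^n$. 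Without this bump-on-the-tangent-paraboloid construction (or an equivalent mechanism that prevents first contact outside the target ball), your sliding family cannot be localized, so the proposal as written does not prove the lemma.
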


\begin{proof} Let $Z_0 \in \Gamma$ with
$$z_0 \in B_r(x_0) \cap D_a,$$
be a contact point and denote by $Y_0$ be the vertex of the corresponding tangent paraboloid $P_{a,Y_0}$.

Let $\phi:\bar{B}_1 \to \mathbb{R}^+$ be the radially symmetric $C^{1,1}$ function
$$ \phi(x)= \left \{
\begin{array}{l}
\alpha^{-1}(|x|^{-\alpha}-1), \quad 1/16 \le |x| \le 1 \\
\ \\
p - \frac 1 2 q |x|^{2}, \quad |x|< 1/16.
\end{array}
\right .
$$
where $\alpha = \Lambda +2$, and $p$, $q$ such that $\phi$ is $C^1$ continuous on the sphere $|x|=1/16$.
We extend $\phi=0$ outside $B_1$.

At any $x \in \bar B_1 \setminus B_{1/16}$ the graph of $\phi$ has a tangent paraboloid of opening $-|x|^{-\alpha-2}$ and center $0$ by below, and on the other hand $\|D^2 \phi(x)\|\ge (\alpha+1)|x|^{-\alpha-2}$.

Next we perturb $\phi$ as $$\phi_\omega:= \max \left  \{\phi, \quad p+  \delta - \frac 12 q |x-\omega|^2 \right  \}, \quad \quad |\omega| \le \delta^2,$$
with $\delta$ small such that $$U_\omega:=\{\phi_\omega > \phi\} \quad \mbox {satisfies} \quad B_{1/16} \subset U_\omega \subset B_{1/8}.$$
In $U_\omega$ the function $\phi_\omega$ is a quadratic polynomial of opening $-q$ and center $\omega$.

We construct the functions $\psi_\omega$ by adding a rescaling of $\phi_\omega$ to the
tangent paraboloid $P_{a,Y_0}$ i.e.,
$$\psi_\omega(x):=P_{a,Y_0}(x)+a r^2\phi_\omega \left ( \frac{x-x_0}{r} \right ).$$
Notice that $\psi_\omega \ge P_{a,Y_0}$, and in the region $x_0 + r U_\omega$ the function $\psi_\omega$ is a quadratic polynomial of opening $-a(1+q)$ and center $$ \frac {1}{q+1}y_0 + \frac{q}{q+1} (x_0 + r \omega ).$$
At $x_0 + r \tilde x$, $\tilde x \in \bar B_1 \setminus U_\omega$ the graph of $\psi_\omega$ has a paraboloid of opening $-\tilde a$ tangent by below with $ \tilde a=a(1+| \tilde x| ^{-\alpha -2})$ and  
$$ \|D^2 \psi_\omega(x)\| \ge a \left((\alpha+1)|\tilde x|^{-\alpha-2}-1\right) > \Lambda \tilde a.$$
This means that translations of the graph of $\psi_\omega$ in the $e_{n+1}$ direction cannot touch $\Gamma$ by below, otherwise we contradict property (P) for $\tilde a \in I$.

Now for each $\omega \in B_\delta$ we first translate the graph of $\psi_\omega $ down in the $e_{n+1}$ direction so that it is below the graph of $P_{a,Y_0}$. Then we translate it up in the $e_{n+1}$ direction till it becomes tangent to $\Gamma$. Since by hypothesis $\Gamma$ is tangent to the graph of $P_{a,Y_0}$ at the point $Z_0$ with $z_0 \in \bar B_r$ we see that the contact first contact point $Z$ must satisfy that $z \in z_0 + r U_\omega$ where the function is quadratic, hence $z \in D_{(q+1)a}$.

Now the conclusion follows from Lemma \ref{ABP} by varying $\omega \in B_\delta$.

\end{proof}

Next we prove a simple measure covering lemma.

\begin{lem}[Covering lemma]
Assume the closed sets $F_k$ satisfy $$F_0 \subset F_1 \subset F_2 \dots \subset \bar B_{5/8}, \quad F_0 \ne \emptyset$$
and for any $x$, $r$ such that $$B_{r/8}(x) \subset B_{5/8}, \quad \bar B_r(x) \subset B_{3/4},$$ $$F_k \cap  \bar B_r(x) \ne \emptyset,$$
then $$|F_{k+1}\cap B_{r/8}(x)|\ge cr^n.$$
Then $$|B_{5/8}\setminus F_k| \le (1-c_1)^{k-C_n}|B_{5/8}|$$
with $C_n$ a constant depending on $n$ and $c_1>0$ is small, depending on $c$ and $n$.
\end{lem}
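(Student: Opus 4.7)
Let $U_k := B_{5/8} \setminus F_k$. The goal is to prove $|U_k| \leq (1-c_1)^{k-C_n}|B_{5/8}|$, and I would do this by combining a preliminary ``spreading'' phase with a Vitali-type geometric decay phase; the estimate then follows by iterating the decay from the end of the spreading phase, while absorbing the initial generations into the dimensionless factor $(1-c_1)^{k-C_n} \geq 1$ for $k \leq C_n$.

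\emph{Step 1 (Spreading).} I would first show that there exist a universal radius $r_0>0$ and an integer $C_n$ depending only on $n$ such that every point of $B_{5/8}$ lies within distance $r_0$ of $F_{C_n}$. Starting from a fixed $y_0 \in F_0$ (which exists since $F_0 \neq \emptyset$), the hypothesis says that whenever $F_k$ meets an admissible ball $\bar B_r(z)$, the set $F_{k+1}$ has positive measure in the concentric ball $B_{r/8}(z)$; in particular $F_{k+1}$ contains a point displaced by an amount comparable to $r$ from the contact point. I would chain such applications along a fixed net of $r_0$-step paths that traverse $B_{1/2}$; since any point of $B_{5/8}$ can be reached from $y_0$ by at most $C_n \lesssim r_0^{-n}$ such hops, after $C_n$ generations $F_{C_n}$ is $r_0$-dense in $B_{5/8}$.

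\emph{Step 2 (Vitali and geometric decay).} For $k \geq C_n$, I would establish $|U_{k+1}| \leq (1-c_1)|U_k|$. For each $x$ in the interior portion of $U_k$, set $r(x) := \mathrm{dist}(x,F_k)$; by Step~1, $r(x) \le r_0$, and $\bar B_{r(x)}(x)$ touches $F_k$. Choosing $r_0$ small enough that the containment conditions $B_{r/8}(x) \subset B_{5/8}$ and $\bar B_r(x) \subset B_{3/4}$ are automatic for these pairs, the hypothesis yields
$$|F_{k+1} \cap B_{r(x)/8}(x)| \geq c\,r(x)^n.$$
Applying the Vitali covering lemma to $\{B_{r(x)/8}(x)\}$, I extract a disjoint subcollection $\{B_{r_i/8}(x_i)\}$ whose $5$-fold dilates still cover. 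Since $B_{r_i}(x_i) \cap F_k = \emptyset$, each $B_{r_i/8}(x_i)$ is a disjoint subset of $U_k$, so summing the density estimate gives
$$|U_k| \leq C \sum_i |F_{k+1} \cap B_{r_i/8}(x_i)| \leq C\,|F_{k+1} \setminus F_k| = C(|U_k|-|U_{k+1}|),$$
which rearranges to the claimed geometric decay with $c_1 = 1/C$.

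\emph{Main obstacle.} The chief technical annoyance is handling points of $U_k$ close to $\partial B_{5/8}$, where the containment conditions for the hypothesis fail even for small $r(x)$. The cleanest fix is to run the Vitali argument only on $U_k \cap B_{5/8-2r_0}$ and absorb the thin spherical shell $B_{5/8} \setminus B_{5/8-2r_0}$ into an adjusted $C_n$; since this shell has measure bounded by a universal constant times $r_0$, increasing $C_n$ lets the final inequality hold trivially for small $k$. A secondary point is making the chain construction of Step~1 precise so that each intermediate ball satisfies the two containment conditions, which is routine by keeping the chain inside $B_{1/2} \subset B_{3/4}$.
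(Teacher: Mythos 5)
Your overall structure (a spreading step to make $F_{C_n}$ densely distributed, followed by a Vitali-type covering argument giving $|U_{k+1}|\le(1-c_1)|U_k|$) is the same as the paper's, and the interior part of your Step~2 is fine. The genuine gap is exactly the point you flag as the ``main obstacle'': your proposed fix for points near $\partial B_{5/8}$ does not work. If you run the covering argument only on $U_k\cap B_{5/8-2r_0}$, what you obtain is at best $|U_{k+1}|\le |U_k|-c_1|U_k\cap B_{5/8-2r_0}|$, hence
$$|U_k|\le (1-c_1)^{k-C_n}|B_{5/8}|+\left|B_{5/8}\setminus B_{5/8-2r_0}\right|,$$
and the shell term is a fixed positive constant of order $r_0$ that does not decay in $k$. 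The claimed conclusion forces $|U_k|\to 0$ as $k\to\infty$, so an additive error of fixed size cannot be ``absorbed into $C_n$'': enlarging $C_n$ only rescales the geometric factor for small $k$, it does nothing for large $k$. Nothing in the hypothesis guarantees that $F_k$ ever acquires measure in the shell unless you make the covering argument reach it, so with your truncation the statement simply does not follow.

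The paper's proof closes this hole by keeping the boundary points in the covering and proving the density estimate there with an inward-shifted center: for $x_0\in B_{5/8}\setminus F_k$ with $r=\mathrm{dist}(x_0,F_k)\le 1/8$ (this bound is what the spreading step is really for), set $x_1:=x_0-\frac r6\frac{x_0}{|x_0|}$. Then $B_{r/6}(x_1)\subset B_{r/3}(x_0)\cap B_{5/8}$, $\bar B_{7r/6}(x_1)\subset B_{3/4}$, and $\mathrm{dist}(x_1,F_k)\le \frac{7r}{6}$, so the hypothesis applied at $x_1$ with radius $\frac{7r}{6}$ yields
$$|F_{k+1}\cap B_{r/3}(x_0)|\ \ge\ c\,r^n\ \ge\ c_1\,|B_r(x_0)\cap B_{5/8}|.$$
With this estimate, the Vitali cover of all of $U_k$ by balls $B_{r_i}(x_i)$ (with $B_{r_i/3}(x_i)$ disjoint and disjoint from $F_k$) gives $|U_k|\le\sum_i|B_{r_i}(x_i)\cap B_{5/8}|\le c_1^{-1}|F_{k+1}\setminus F_k|$, and hence the genuine geometric decay $|U_{k+1}|\le(1-c_1)|U_k|$ on the full ball. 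You should replace your truncation by this (or an equivalent) boundary adjustment; the rest of your argument, including the Vitali bookkeeping with $\sum_i|F_{k+1}\cap B_{r_i/8}(x_i)|\le|F_{k+1}\setminus F_k|=|U_k|-|U_{k+1}|$, then goes through.
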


\begin{proof}
We start with a point in $F_0$ and we apply the property above inductively $C_n$ times (say with $r=1/16$) such that $F_{C_n}$ contains $C_n$ points that are spred around $B_{1/2}$ such  that $dist(x,F_{C_n}) \le 1/8$ for any $x \in \bar B_{5/8}$.
 
Let  $x_0\in B_{5/8}\setminus F_k$, $k \ge C_n$ and let $$r:= dist(x_0,F_k) \le 1/8.$$
We first prove that
\begin{equation}\label{mme}
|F_{k+1}\cap B_{r/3}(x_0)|\ge c_1|B_r(x_0)\cap B_{5/8}|.
\end{equation}
Let (see Figure 8) $$x_1:=x_0-\frac{r}{6}\frac{x_0}{|x_0|},$$ and it is easy to check that
$$B_{r/6}(x_1)\subset B_{r/3}(x_0) \cap B_{5/8}, \quad \quad \bar B_{7r/6}(x_1) \subset B_{3/4}.$$
Since $$dist(x_1, F_k)\le r+\frac{r}{6},$$
we apply the hypothesis and conclude $$|F_{k+1} \cap B_{r/6}(x_1)| \ge cr^n \ge c_1|B_r(x_0)\cap B_{5/8}|, $$ which proves (\ref{mme}).

For each $x \in B_{1/2} \setminus F_k$ with $k \ge C_n$ we let $r=dist(x, F_k)$. From the family $B_r(x)$ we choose a Vitali subcover, i.e balls $B_{r_i}(x_i)$ that cover $B_{5/8} \setminus D_k$ for which $B_{r_i/3}(x_i)$ are disjoint.

We have
\begin{align*}
|B_{5/8}\setminus F_k| &\le \sum |B_{r_i}(x_i) \cap B_{5/8}|\\
&\le \sum c_1^{-1}|B_{r_i/3}(x_i) \cap F_{k+1}| \le c_1^{-1}|F_{k+1}\setminus F_k|,\end{align*}
which implies $$|B_{1/2}\setminus F_{k+1}|\le |B_{1/2}\setminus F_k| -|F_{k+1}\setminus F_k|\le (1-c_1)|B_{1/2}\setminus F_k|.$$

\end{proof}


{\it Proof of Proposition \ref{mem}.} Let $$a=20 \theta,$$ and define $$F_k:=D_{C^ka} \cap \bar B_{5/8},$$ where $C$ is the constant from Lemma \ref{mit}. Since $ \Gamma \subset \{x_n+1 \ge 0\}$, the paraboloid of opening $-a$ and center $0$ touches $\Gamma$ for the first time in $\bar B_{5/8}$, hence $F_0 \ne \emptyset$. Moreover, by Lemma \ref{mit} the hypothesis of the Covering lemma are satisfied as long as $C^{k}a \in I$. Thus $$|B_{5/8} \setminus D_{C^k a}| \le (1-c_1)^{k-C_n}|B_{5/8}|,$$
and we prove the proposition by choosing $k$ large depending on $\mu$. 

\qed

\end{document}